\documentclass[a4paper]{article}

\usepackage{amsmath,amssymb}
\usepackage{graphicx}
\usepackage{siunitx}
\usepackage{placeins} 

\newcommand{\TheTitle}{On Composite Discontinuous Galerkin Method for simulations of electric properties of semiconductor devices} 

\newcommand{\TheAbstract}{
    In this paper, a variant of discretization of the van Roosbroeck equations in the equilibrium state with the Composite Discontinuous Galerkin Method for the rectangular domain is discussed. It is based on Symmetric Interior Penalty Galerkin (SIPG) method. The proposed method accounts for lower regularity of the solution on the interfaces of devices' layers.
    It is shown that the discrete problem is well-defined and that discrete solution is unique. Error estimates are derived. Finally, numerical simulations are presented.
}

\DeclareMathOperator{\built}{built}
\DeclareMathOperator{\bias}{bias}

\DeclareMathOperator{\lhs}{LHS}
\DeclareMathOperator{\nb}{nb}

\DeclareMathOperator{\rhs}{RHS}

\DeclareMathOperator{\error}{error}

\DeclareMathOperator{\rad}{rad}

\usepackage{amsthm}
\usepackage{hyperref}
\usepackage{authblk}
\DeclareMathOperator{\supp}{supp}

\hypersetup{%
    pdftitle={\TheTitle},
    pdfauthor={Konrad Sakowski, Leszek Marcinkowski, Pawel Strak, Pawel Kempisty, Stanislaw Krukowski},
    pdfkeywords={Composite Discontinuous Galerkin Method, drift-diffusion, van Roosbroeck equations}
    }

\title{\TheTitle}
\date{6. November 2018}

\author[1,3]{Konrad Sakowski\thanks{konrad@unipress.waw.pl}}
\author[3]{Leszek Marcinkowski}
\author[1]{Pawel Strak}
\author[1]{Pawel Kempisty}
\author[2]{Stanislaw Krukowski}
\affil[1]{Institute of High Pressure Physics, Polish Academy of Sciences, ul. Sokolowska 29/37, 01-142 Warsaw, Poland}
\affil[2]{Interdisciplinary Centre for Materials Modeling, Warsaw University, ul. Pawińskiego 5a, 02-106 Warsaw, Poland}
\affil[3]{Department of Mathematics, Computer Science and Mechanics, Warsaw University, ul. Banacha 2, 02-097 Warsaw, Poland}

\newcounter{thmcounter}
\numberwithin{thmcounter}{section}
\newtheorem{theorem}[thmcounter]{Theorem}
\newtheorem{lemma}[thmcounter]{Lemma}
\newtheorem{remark}[thmcounter]{Remark}

\begin{document}

\maketitle

\begin{abstract}
		\TheAbstract
\end{abstract}

\tableofcontents

\section{Introduction}

	Numerical simulations are the important tool in the development of semiconductor devices. Since our contemporary electronics relies on the semiconductors, there is a strong demand for the progress in this domain. Examples of such devices are light emitting diodes, lasers, transistors, detectors, and many others. There are various approaches in simulations of such devices, depending on precision, efficiency, and size of a simulated fragment. On the one hand, there are so-called \emph{ab initio} methods, which are used to investigate properties of elements composed of hundreds of thousands of atoms. These methods use fundamental laws of physics, and they need days or weeks to perform a single simulation on a computational cluster.
	Then there is a drift-diffusion theory. In this case, the model is much simpler, and it allows to simulate whole semiconductor device on a standard desktop computer. This model describes two kinds of carriers (electrons and holes), which move in the electric field present in semiconductor devices. From the mathematical point of view, it consists of a system of three nonlinear elliptic differential equations, which are called the van Roosbroeck equations \cite{BellSystTechJ-1950-29-560}.

	Numerical modelling of semiconductor devices with the drift-diffusion model has been performed since 1964, when Gummel \cite{IEEETransElectDev-1964-11-455} proposed a numerical algorithm based on the simple iteration method. Various methods were used for discretization of the van Roosbroeck equations, for example Finite Difference Method (FDM) \cite{Book-Selberherr-1984}, Box method \cite{SIAMJNumerAnal-1987-24-777}, Finite Element Method (FEM) \cite{Computing-1996-56-1}. Special variants of discretizations optimized for the so-called continuity equations were developed \cite{RepProgPhys-1999-62-277}.

	In this paper, we focus our analysis on the following nonlinear elliptic equation for $u^*$
	\begin{equation}
		\label{eq:poisson:intro}
		\begin{split}
			- \nabla \cdot \Big(\varepsilon \nabla u^* \Big) + e^{u^* - v^*} - e^{w^* - u^*} = k_1,
		\end{split}
	\end{equation}
	which is a special case of the van Roosbroeck problem: find $u^*, v^*, w^*$, such that
	\begin{equation}
	\label{eq:dd}
	\begin{split}
	- \nabla \cdot \left(\varepsilon \nabla u^*\right) + e^{u^* - v^*} - e^{w^* - u^*} = k_1,
	\\
	- \nabla \cdot (\mu_n e^{u^* - v^*} \nabla v^*) - Q(u^*,v^*,w^*) (e^{w^*-v^*} - 1) = 0,
	\\
	- \nabla \cdot (\mu_p e^{w^* - u^*} \nabla w^*) + Q(u^*,v^*,w^*) (e^{w^*-v^*} - 1) = 0.
	\end{split}
	\end{equation}
	Functions $\varepsilon(x), \mu_n(x), \mu_p(x), k_1(x)$ are material parameters and $Q(x,u,v,w)$ is an operator depending on the semiconductor material. We do not want to discuss properties of these equations, we refer to \cite{Book-Selberherr-1984,Book-Markowich-1990} for physical details and to \cite{Book-Jerome-1996} for a mathematical background. In the equilibrium case, when there is no energy exchange between a simulated device and the environment, functions $v^*, w^*$, which correspond to the quasi-Fermi levels \cite{Book-Markowich-1990,JNumerMethEng-1987-24-763}, are constant due to physical nature of this problem and system \eqref{eq:dd} simplifies to \eqref{eq:poisson:intro}.

	We would like to emphasize main problems with the numerical solution of \eqref{eq:dd}.
	The first issue is the nonlinearity. Depending on a device composition and design, the coefficients of the latter two equations may vary by several orders of magnitude. There are various approaches to the solution of this system. They may involve decoupling, Banach iteration \cite{CommPureApplMath-1972-25-781,IEEETransElectDev-1964-11-455}, Newton method \cite{NumerMath-1992-60-525}, etc. In this paper, we do not want to go into detail about this problem. For the specific solution method used by us in numerical simulations, please refer to \cite{LectNotesComputSci-2014-8385-551}.

	The problem we discuss here is the discretization of these equations. As mentioned, FDM and FEM discretizations are successfully used for this system since the second half of 20th century \cite{Book-Selberherr-1984,JNumerMethEng-1987-24-763}. However, a design of the semiconductor devices has been substantially changed over time. Initially, semiconductor transistors or diodes were made from a single material (e.g.,~silicon) divided into layers with different doping level. These conditions were mathematically reflected by variations $k_1$ function, possibly discontinuous, while $\varepsilon, \mu_n, \mu_p$ remained constant.
	On the contrary, contemporary semiconductor devices, like blue laser diodes (see Figure \ref{rys:laser2}), consist of layers of different semiconductor material deposited one on another. Recent designs also involve the change of the material through one layer. The material parameters, like $\varepsilon, \mu_n, \mu_p$, are no longer constant. In general, they are discontinuous. However, these discontinuities are localized on the layers' interfaces, and inside a layer, these parameters are constant or, in general, smooth functions.

	Thus to obtain a good precision, it would be advantageous to use a discretization which takes into account such localized lack of regularity, discontinuities of coefficients on interfaces and which allows exploiting higher regularity inside layers. A natural discretization method for such a problem would be the Discontinuous Galerkin Method (DGM) \cite{Book-Riviere-2008,Book_DiPietro_MathAspDGM}. However, this method by its nature imposes much more degrees of freedom in the simulations, leading to slower and more memory-consuming simulations. Since the physical layers of semiconductor devices have regular shapes, it is feasible to use the Composite Discontinuous Galerkin Method (CDGM) \cite{ComputMethApplMath-2003-3-76}, which is a hybrid between Continuous and Discontinuous Galerkin Method. It allows to divide the domain into subdomains, on which the standard continuous Finite Element Method is used, and on the interfaces between these subdomains, the Interior Penalty method is used, thus allowing for discontinuities. This approach allows to greatly reduce the number of additional degrees of freedom, as they are only needed on the interfaces. Besides, CDGM does not require conforming grids on the interfaces, thus allowing for independent grids for subdomains.

	Composite Discontinuous Galerkin Method is currently successfully developed and used for various problems, for example elliptic eigenvalue problems \cite{ApplMathComput-2015-267-618}, parabolic problems \cite{ComputMethApplMechEng-2001-190-3401}, Darcy flow in homogeneous porous media \cite{IntJSolidStruct-2008-45-6436}. A FETI--DP-type method (Dual Primal Finite Element Tearing and Interconnecting) for CDGM in two dimensions was proposed in \cite{SIAMJNumerAnal-2013-51-400}.

	We aim to use Composite Discontinuous Galerkin Method for semiconductor device simulations due to several reasons.
	First, by its nature, it accounts for separate meshes on the device's layers, so it is possible to use nonconforming grids in general and to tune the mesh for one layer without affecting the rest of the domain.
	Moreover, in simulations of gallium nitride laser diodes, the coefficients of the elliptic equations vary by several orders of magnitude on interfaces between semiconductor material layers. This effect occurs in particular in the active region of semiconductor devices, on interfaces between quantum wells and quantum barriers, on boundaries of the electron blocking layers, etc. Highly varying coefficients are not present in equation \eqref{eq:poisson:intro}, but they occur in the two latter equations of \eqref{eq:dd}. Discontinuous Galerkin Method is more robust than continuous FEM in case of discontinuous, highly variable coefficients. While in this paper we deal with equation \eqref{eq:poisson:intro}, the goal of our study is to use the CDGM method for van Roosbroeck system \eqref{eq:dd}.
	In practice, in physical simulations, we also have to introduce additional physical effects, which are not accounted for by the formulation \eqref{eq:dd}. The important example here is the polarization, which leads to significant interface charges in the nitride-based devices. This effect may be introduced into \eqref{eq:poisson:intro} by addition of the distributional derivatives on the interfaces, which lead to discontinuities of fluxes or unknown functions. In case of Discontinuous Galerkin methods, these discontinuities may be introduced to the model in a very natural way.
	Another reason for using CDGM on the physical background is the local mass conservation, which is a known property of Discontinuous Galerkin Method \cite{Book-Riviere-2008}. This property, in our specific case, corresponds to the Gauss law, while the locality is limited to the subdomains of the device.

	In this paper, we would like to present the error analysis of the CDGM variant for equilibrium state solutions of the van Roosbroeck equations in $\mathbb{R}^2$. We limit our analysis to this case, as the proof framework used in this paper, which is borrowed from the DGM analysis of the Navier-Stokes problem \cite{MathComput-2005-74-53}, imposes the uniqueness of the solution, which is not guaranteed in the non-equilibrium state. For a one-dimensional domain, we have numerical evidence of convergence of the presented method for both equilibrium and non-equilibrium state \cite{LectNotesComputSci-2016-9574-391}. This discretization was also used by our research group in simulations of realistic semiconductor devices \cite{JApplPhys-2012-111-123115}.


	In our analysis, we focus on standard continuous polynomial $\mathbb{P}^k$ element. Simulations, however, are limited to $\mathbb{P}^1$ case only. While there are many computer libraries and frameworks for FEM and DGM discretizations, none that we are aware of supports CDGM out of the box. In particular, it is not possible to define separate meshes across subdomains. Therefore we develop our framework, which currently supports only standard continuous linear $\mathbb{P}^1$ element. While mathematical analysis is presented for equation \eqref{eq:poisson:intro}, in simulations we also cover full drift-diffusion system \eqref{eq:dd}. 

	The remainder of this paper is organized as follows. We start with introduction of the differential problem in Section \ref{sec:differential:problem}. We propose a variant of CDGM discretization of this problem in Section \ref{sec:discretization}. Main result of this paper is stated in Section \ref{sec:main:result}. Then we show existence and uniqueness of the introduced discrete problem in Sections \ref{sec:existence}, \ref{sec:uniqueness}. In Section \ref{sec:interpolation:convergence:2d} we discuss interpolation properties of the discrete space. Then we pass to the error estimate in Section \ref{sec:error:estimates}. Finally we present results of numerical simulations in Section \ref{sec:numerical:experiments} and we conclude in Section \ref{sec:conclusions}.

\section{Differential problem}
	\label{sec:differential:problem}

	The drift-diffusion model describes the relationship between the electrostatic potential and the charge carrier concentrations: electrons and holes \cite{Book-Wilkes-1973,Book-Sze-2006}. The physical derivation of this model is beyond the scope of this work. Therefore we will focus on the mathematical standpoint.

	We start with the domain $\Omega$ of our problem. Luminescent semiconductor devices are made of planar layers deposited one on another, which vary in composition of a semiconductor material or number of impurities (see Figure \ref{rys:laser2}). At opposite ends, metal contacts are attached, where the current can be applied. If this is the case, it flows through the device perpendicular to the deposited layers. We assume that $\Omega$ is a rectangle with boundary $\partial \Omega = \partial \Omega_D \cup \partial \Omega_N$.

	\begin{figure}
		\centering
		\includegraphics[scale=0.65]{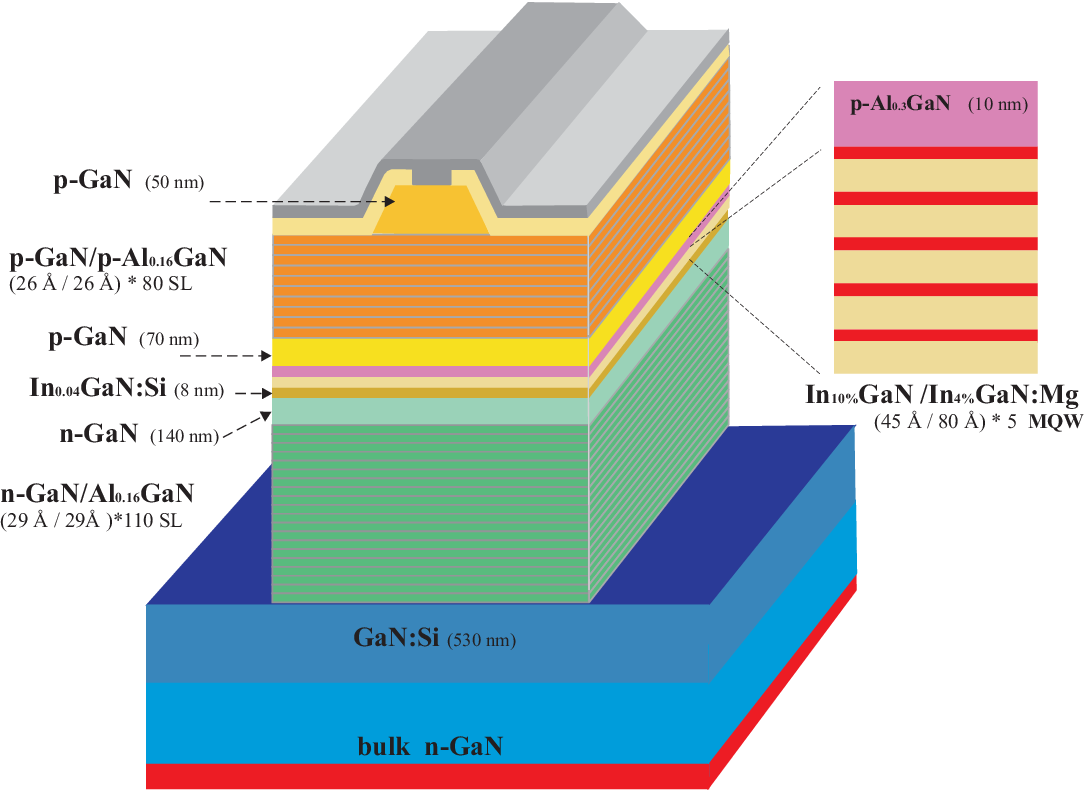}
		\caption{\label{rys:laser2} Example of a gallium nitride semiconductor laser structure.}
	\end{figure}


	In this paper, we deal with equilibrium state. It corresponds to the following differential problem: find $u^* \in H^1(\Omega)$, such that
	\begin{equation}
		\label{eq:probl:diff:u}
		\begin{split}
			- \nabla \cdot \Big(\varepsilon(x) \nabla u^* \Big) + e^{u^* - \hat{v}} - e^{\hat{w} - u^*} = k_1,
			\\
			u^* = \hat{u} \mbox{ on } \partial \Omega_D,
			\\
			\nabla u^* \cdot \nu = 0 \mbox{ on } \partial \Omega_N,
		\end{split}
	\end{equation}
	where $\hat{v} = \hat{w} \equiv \mbox{const}$. Since some results of this paper may be also applied to non-equilibrium case, we consider more general assumption that $\hat{v},\hat{w} \in L_\infty(\Omega)$.
	Also we assume that $\varepsilon, \hat{u} \in H^1(\Omega) \cap L_\infty(\Omega)$ and $0 < \varepsilon_m \leq \varepsilon(x) \leq \varepsilon_M$, $\varepsilon_m, \varepsilon_M \in \mathbb{R}$.

	The following theorem is essential for the results presented in this paper. Its proof may be found in \cite{SIAMJApplMath-1985-45-565}. 
	\begin{theorem}
		Solution $u^*$ of problem \eqref{eq:probl:diff:u} is bounded.
	\end{theorem}

	A weak formulation of the differential problem \eqref{eq:probl:diff:u} is as follows. Find $u^* \in \hat{u} + H^1_0(\Omega)$, such that
	\begin{equation}
		\label{eq:probl:weak:u}
		\begin{split}
			a(u^*,\varphi) + b(u^*,\varphi) = f(\varphi)
			\quad
			\forall \varphi \in H^1_{0,\partial \Omega_D}(\Omega),
		\end{split}
	\end{equation}
	where
	\begin{equation}
		\label{eq:def:a:b:f}
		\begin{split}
			a(u,\varphi)
			:= &
				\int_\Omega \varepsilon(x) \nabla u(x) \cdot \nabla \varphi(x) \, dx,
			\\
			b(u, \varphi)
			:= &
				\int_\Omega  \Big( e^{u(x) - \hat{v}(x)} - e^{\hat{w}(x) - u(x)} \Big) \varphi(x) \, dx,
			\\
			f(\varphi)
			:= &
				\int_\Omega  k_1(x) \varphi(x) \, dx.
		\end{split}
	\end{equation}
	We use the following notation:
	\begin{equation}
		\begin{split}
			C^\infty_{0,\partial \Omega_D} (\overline{\Omega}) := \{ f \in C^\infty(\overline{\Omega}): f|_{\partial \Omega_D} \equiv 0 \},
			\\
			\quad
			H^1_{0,\partial \Omega_D}(\Omega) :=
				\mbox{closure of } C^\infty_{0,\partial \Omega_D} (\overline{\Omega})
				\mbox{ in }H^1(\Omega).
		\end{split}
	\end{equation}

\section{Discretization}
	\label{sec:discretization}

	\subsection{Discrete space}
		\label{sec:discrete:space:2d}
	
		\begin{figure}
			\centering
			\includegraphics[scale=1]{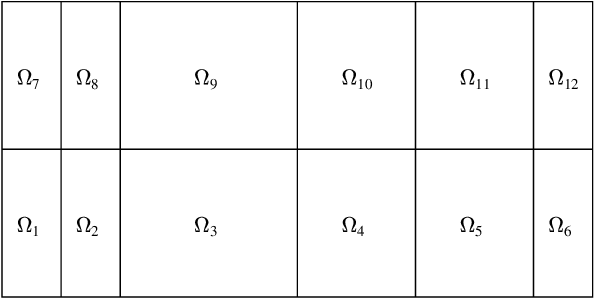}
			\caption{\label{rys:omega2d} An example of two-dimensional coarse grid of $\Omega$.}
		\end{figure}

		Let $\Omega \subset \mathbb{R}^2$ be a rectangle, divided to disjoint subrectangles $\{\Omega_i\}_{i=1}^{N} =: \mathcal{E}$ in such a manner that $\mathcal{E}$ is a conforming mesh \cite{Book_Toselli_DomaDeco} of $\Omega$ (Figure \ref{rys:omega2d}).
		We will call this division a coarse grid, and we assume that if $e \subset \partial \Omega$ is an edge of some $\Omega_i$, then either $e \subset \partial \Omega_D$ or $e \subset \partial \Omega_N$.

		Let us define triangulations $\mathcal{T}_{h_i} := \mathcal{T}_{i,h_i}(\Omega_i)$, where $h_i := \max\{\mathrm{diam}(\tau): \tau \in \mathcal{T}_{h_i}\}$. By $\mathcal{N}_{h_i}$ we denote the nodes of the triangulation $\mathcal{T}_{h_i}$.
		We assume that $\{\mathcal{T}_{i,h_i}(\Omega)\}_{h_i}$ is a regular uniform family of triangulations \cite{Book-Brenner-2008}.
		We will define
		$
			\mathcal{T}_h := \bigcup_{i=1}^{N} \mathcal{T}_{h_i}.
		$
		For $s > 0$, we define the broken Sobolev spaces $H^s(\mathcal{E})$ and $H^s(\mathcal{T}_h)$ as
		\begin{equation}
			\label{eq:def:broken:sob}
			\begin{split}
				H^s(\mathcal{E}) & := \{v \in L_2(\Omega): \forall i \in \{1,\ldots,N\} \; \; v_i := v|_{\Omega_i} \in H^s(\Omega_i)\}
				\subset L_2(\Omega),
				\\
				H^s(\mathcal{T}_h) & := \{v \in L_2(\Omega): \forall \tau \in \mathcal{T}_h \; \; v|_{\tau} \in H^s(\tau)\}
				\subset L_2(\Omega).
			\end{split}
		\end{equation}
		Then on every $\Omega_i$ we define a discrete space $X_{h_i}(\Omega_i)$ of piecewise polynomial functions on the triangulation $\mathcal{T}_{h_i}$:
		\begin{equation}
			\label{eq:def:x:hi:2d}
			X_{h_i} := X_{h_i}(\Omega_i) :=
			\Big\{
				u_{h,i} \in \mathcal{C}(\overline{\Omega}_i): 
				\forall \tau \in \mathcal{T}_{h_i} 
				\, \, \,
				u_{h,i}\big|_\tau \in \mathbb{P}^k(\tau)
			\Big\},
		\end{equation}
		where $k \geq 1$ is some integer.
		Finally we define $X_h(\Omega)$ as
		\begin{equation}
			\label{eq:def:x:h:2d}
			\begin{split}
				X_h(\Omega)
					& =
					X_{h_1}(\Omega_1) \times \cdots \times X_{h_N}(\Omega_N).
			\end{split}
		\end{equation}
		Note that we may treat any element of $X_h(\Omega)$ as a piecewise-continuous function, which values are determined up to interfaces $\partial \Omega_i \cap \partial \Omega_j$. Thus we identify $X_h(\Omega)$ with a suitable subset of $L_2(\Omega)$ space.
		Then note that $X_h(\Omega) \not \subset H^1(\Omega)$ and $X_h(\Omega) \not \subset H^2(\mathcal{E})$, but $X_h(\Omega) \subset H^1(\mathcal{E})$, $H^1(\Omega) \subset H^1(\mathcal{E})$ and $X_h(\Omega) \subset H^2(\mathcal{T}_h)$. 

		By $\Gamma$ we denote a set of all internal and boundary edges of $\mathcal{E}$. Then $\Gamma$ is a sum of disjoint sets $\Gamma_D$, $\Gamma_N$ and $\Gamma_I$, where
		\begin{equation}
			\begin{split}
				\Gamma_D & := \{ e \in \Gamma: e \subset \partial \Omega_D\},
				\Gamma_N := \{ e \in \Gamma: e \subset \partial \Omega_N\},
				\Gamma_I := \{ e \in \Gamma: e \subset \mathrm{int}(\Omega)\}.
			\end{split}
		\end{equation}
		Therefore $\Gamma_D$ (resp. $\Gamma_N$) contains edges lying on the boundary where Dirichlet (resp. Neumann) boundary conditions are imposed and in $\Gamma_I$ there are all internal edges, which we call interfaces, as they frequently correspond to the physical interfaces between different semiconductor materials.
		We also define
		\begin{equation}
			\begin{split}
				\Gamma_{DI} & := \Gamma_D \cup \Gamma_I,
				\quad
				\Gamma_i := \{e \in \Gamma: e \subset \partial \Omega_i \}.
			\end{split}
		\end{equation}
		Let $e \in \Gamma$. Then two cases are possible. Either $e \in \Gamma_D \cup \Gamma_N$, so there is an unique $\Omega_i$ such that $e$ is an edge of $\Omega_i$, or $e \in \Gamma_I$ and there are exactly two sets $\Omega_i, \Omega_j \in \mathcal{E}$ such that $e$ is their common edge. Also we define $\nb(\Omega_i) := \{\Omega_l \in \mathcal{E}: \Gamma_i \cap \Gamma_l \neq \emptyset\}$.
		Moreover, for $e \in \Gamma_D \cup \Gamma_N$ by $\nu$ we denote the normal vector to $\Omega$. On the other hand, for $e \in \Gamma_I, e = \partial \Omega_i \cap \partial \Omega_j, i<j$ we define $\nu$ to be a vector normal to $\Omega_i$.
		Thus also $-\nu$ is normal to $\Omega_j$. Opposite direction of these vectors may also be used, but they must be used consequently.
		
		For $s > 1/2$ we define operators $[\cdot] := [\cdot]_e : H^s(\mathcal{E}) \rightarrow L_2(e)$, $\{\cdot\} := \{\cdot\}_e : H^s(\mathcal{E}) \rightarrow L_2(e)$ as
		\begin{equation}
			\begin{split}
				[u] & :=
				\begin{cases}
					u_{i} - u_{j} &\text{ if } e \subset \Gamma_I, e = \partial \Omega_i \cap \partial \Omega_j, i<j\\
					u_{i} &\text{ if } e \subset \Gamma_D \cup \Gamma_N, e = \partial \Omega_i \cap \partial \Omega,\\
				\end{cases}
				\\
				\{u\} & :=
				\begin{cases}
					\frac{1}{2} \big( u_{i} + u_{j} \big) &\text{ if } e \subset \Gamma_I, e = \partial \Omega_i \cap \partial \Omega_j,\\
					u_{i} &\text{ if } e \subset \Gamma_D \cup \Gamma_N, e = \partial \Omega_i \cap \partial \Omega.\\
				\end{cases}
			\end{split}
		\end{equation}
		For convenience, we will also use this notion for triangulation parameters, i.e.,
		\begin{equation}
			\begin{split}
				 \{h^{-s}\} := \Big\{\frac{1}{h^s}\Big\} & :=
				\begin{cases}
					\frac{1}{2} \Big( \frac{1}{h^s_{i}} + \frac{1}{h^s_{j}} \Big) &\text{ if } e = \partial \Omega_i \cap \partial \Omega_j, \\
					\frac{1}{h^s_{i}} &\text{ if } e = \partial \Omega_i \cap \partial \Omega.
				\end{cases}
			\end{split}
		\end{equation}
		
		For further analysis, we introduce so-called broken norm $\|\cdot\|_{h}$ in $X_h(\Omega)$ as
		\begin{equation}
			\label{def:broken:norm:2d}
			\|u_{h}\|_{h}^2
			:=
				\sum_{i=1}^N \int_{\Omega_i} \varepsilon \Big( \nabla u_{h,i} \Big)^2\, dx
				+
				\sum_{e \in \Gamma_{DI}} \eta_{e} \int_e [ u_h ]^2\, ds
				.
		\end{equation}
		where
		\begin{equation}
			\label{eq:def:eta:e}
			\eta_{e} := 2 \sigma_e \{h^{-1}\}
				=
				\begin{cases}
					2 \sigma_e  h_i^{-1}   & e \in \Gamma_D, e \subset \Omega_i,\\
					\sigma_e \Big( h_i^{-1} + h_j^{-1} \Big)  & e \in \Gamma_I, e \subset \Omega_i \cap \Omega_j.
				\end{cases}
		\end{equation}
		Here $\sigma_e > 0$ is a penalty parameter.

		To simplify the analysis, we assume that $0<\sigma_0 \leq \sigma_e$ for all $e \in \Gamma_{DI}$. Also we assume that $0<h_i < h_0 \leq 1$ for all $i \in \{1, \ldots, N\}$. The choice of $\sigma_0$ and $h_0$ will be discussed later in lemmas \ref{lem:broken:koerc:symm} and \ref{lemma:poincare:broken:norm:2d}.

		We also need the following standard result for FEM spaces:
		\begin{lemma}
			\label{lemma:interface:trace:estimates}
			For any $u_h \in X_h(\Omega)$, $\Omega_i \in \mathcal{E}$ and $e \in \Gamma_i$, the following estimates hold
			\begin{eqnarray}
				\|u_{h,i} \|_{L_2(e)}
				& \leq &
					C h_i^{-1/2} \| u_{h,i} \|_{L_2(\Omega_i)}
				, \\
				\|\nabla u_{h,i} \cdot \nu \|_{L_2(e)}
				& \leq &
					C h_i^{-1/2} | u_{h,i} |_{H^1(\Omega_i)}
				.
			\end{eqnarray}
			Constant $C$ does not depend on $h_i$.
		\end{lemma}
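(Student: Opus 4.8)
The plan is to prove both estimates by localizing from the coarse edge $e$ to the edges of the fine triangulation $\mathcal{T}_{h_i}$ that tile it, establishing the scaled bound on each fine element, and then summing. Since $e \in \Gamma_i$ is an edge of the coarse element $\Omega_i$, it decomposes as $e = \bigcup_k e_k$, where each $e_k$ is an edge of some fine element $\tau_k \in \mathcal{T}_{h_i}$ lying on $\partial \Omega_i$; consequently $\|u_{h,i}\|_{L_2(e)}^2 = \sum_k \|u_{h,i}\|_{L_2(e_k)}^2$, and likewise for the normal-derivative term. Because $\{\mathcal{T}_{h_i}\}$ is a regular uniform family, every such $\tau_k$ satisfies $\diam(\tau_k) \sim h_i$, $|\tau_k| \sim h_i^2$ and $|e_k| \sim h_i$, with implied constants depending only on the shape-regularity of the family; this is precisely what lets me absorb all element sizes into powers of $h_i$ with $h_i$-independent constants. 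Throughout I use that $u_{h,i} = u_h|_{\Omega_i}$, so $\|u_{h,i}\|_{L_2(\Omega_i)} = \|u_h\|_{L_2(\Omega_i)}$ and $|u_{h,i}|_{H^1(\Omega_i)} = |u_h|_{H^1(\Omega_i)}$.

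For the first inequality I would invoke on each fine element the standard scaled trace inequality
\[
	\|v\|_{L_2(e_k)}^2 \leq C\Big( h_i^{-1} \|v\|_{L_2(\tau_k)}^2 + h_i \, |v|_{H^1(\tau_k)}^2 \Big),
\]
obtained by mapping $\tau_k$ to a fixed reference element and rescaling. Since $u_{h,i}$ is affine on each $\tau_k$, the inverse inequality $|u_{h,i}|_{H^1(\tau_k)} \leq C h_i^{-1} \|u_{h,i}\|_{L_2(\tau_k)}$ applies, and substituting it collapses the right-hand side to $C h_i^{-1} \|u_{h,i}\|_{L_2(\tau_k)}^2$. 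Summing over the $\tau_k$ meeting $e$ and bounding that partial sum by the full $\|u_{h,i}\|_{L_2(\Omega_i)}^2$ yields the claim after taking square roots.

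The second inequality is even more direct and exploits that $\nabla u_{h,i}$ is \emph{constant} on each $\tau_k$ because $u_{h,i}$ is affine there. Writing $g_k := \nabla u_{h,i}|_{\tau_k}$, I have $\|\nabla u_{h,i}\cdot \nu\|_{L_2(e_k)}^2 = |e_k|\,(g_k\cdot\nu)^2 \leq |e_k|\,|g_k|^2$, while $|u_{h,i}|_{H^1(\tau_k)}^2 = |\tau_k|\,|g_k|^2$. Dividing gives
\[
	\|\nabla u_{h,i}\cdot\nu\|_{L_2(e_k)}^2 \leq \frac{|e_k|}{|\tau_k|}\,|u_{h,i}|_{H^1(\tau_k)}^2 \leq C h_i^{-1} |u_{h,i}|_{H^1(\tau_k)}^2
\]
by the regularity ratios above. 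Summing over the fine edges tiling $e$ and bounding by $|u_{h,i}|_{H^1(\Omega_i)}^2$ completes the argument.

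The only genuine obstacle is bookkeeping rather than analysis: one must verify that the regular uniform hypothesis indeed forces $|e_k| \sim h_i \sim \diam(\tau_k)$ and $|\tau_k| \sim h_i^2$ uniformly, so that every constant $C$ appearing above is independent of $h_i$ (and of the particular element), as the statement requires. Once the reference-element scaling and the inverse inequality for the fixed-degree space $\mathbb{P}_1$ are recorded with explicit scaling exponents, no further difficulty remains.
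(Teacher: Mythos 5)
Your proposal is correct and takes essentially the same route as the paper, whose one-line proof ("the trace theorem applied to each edge of fine elements in $\Omega_i$ coincident with $e$ followed by a scaling argument") is precisely your decomposition of $e$ into fine edges, the reference-element scaled trace inequality, and the scaling/inverse estimates valid for $\mathbb{P}_1$ functions under quasi-uniformity. Your write-up merely supplies the details (inverse inequality for the first bound, constancy of $\nabla u_{h,i}$ for the second) that the paper leaves implicit.
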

		These estimates are a consequence of the trace theorem applied to each edge of fine elements in $\Omega_i$ coincident with $e$ followed by a scaling argument.

	\subsection{Discrete problem}

		We discuss a variant of the Composite Discontinuous Galerkin discretization, derived from Symmetric Interior Penalty Galerkin (SIPG) method (cf. \cite{Book-Riviere-2008} or \cite{Book_DiPietro_MathAspDGM}). We use the composite formulation (cf. \cite{ComputMethApplMath-2003-3-76}), i.e., inside every $\Omega_i$ we use the Finite Element Method on the triangulation $\mathcal{T}_{h_i}$, while on boundaries $e \in \Gamma_{DI}$ we use the Discontinuous Galerkin Method.

		This problem is defined as follows. Find $u_h^* \in X_h(\Omega)$ such that 
		\begin{equation}
			\label{eq:std:symm:problem}
			a_{h}(u_h^*,\varphi_h)
			+ b(u_h^*,\varphi_h)
			=
				f_{h}(u_h^*,\varphi_h),
			\quad
			\forall \varphi_h \in X_h(\Omega),
		\end{equation}
		where
		\begin{equation}
			\label{eq:ah:b:fh:symm}
			\begin{split}
				a_{h}(u_h,\varphi_h)
					 = &
						\sum_{i=1}^N \int_{\Omega_i}  \varepsilon \nabla u_{h,i} \cdot \nabla \varphi_{h,i}\, dx
					-
					\sum_{e \in \Gamma_{DI}} \int_e \{\varepsilon \nabla u_h \cdot \nu\} [\varphi_h]\, ds
				\\ &
					-
					\sum_{e \in \Gamma_{DI}} \int_e \{\varepsilon \nabla \varphi_h \cdot \nu\} [u_h]\, ds
					+
					\sum_{e \in \Gamma_{DI}} \eta_{e} \int_e [ u_h ] \cdot [ \varphi_h ]\, ds
					,
				\\
				f_{h}(\varphi_h)
				 = &
					\int_{\Omega} k_1 \varphi_h\, dx
					-
					\sum_{e \in \Gamma_{D}} \int_e \{\varepsilon \nabla \varphi_h \cdot \nu\} [\hat{u}]\, ds
				\\ &
					+
					\sum_{e \in \Gamma_D} \eta_{e} \int_e [ \hat{u} ] [ \varphi_h ]\, ds
				,
			\end{split}
		\end{equation}
		and $b(u,\varphi)$ is defined as in \eqref{eq:def:a:b:f}.

\section{Main result}
	\label{sec:main:result}

	Most of this paper is dedicated to justifying the following result.
	\begin{theorem}
		\label{thm:main:estimate}
		\
		\begin{enumerate}
			\item[(a)] The solution $u_h^* \in X_h(\Omega)$ of discrete problem \eqref{eq:std:symm:problem} exists and it is unique.
			\item[(b)] Assume that $u^* \in H^1(\Omega) \cap H^{k+1}(\mathcal{E})$, $k \geq 1$, is a solution of differential problem \eqref{eq:probl:weak:u} and $\varepsilon \in L_\infty(\Omega)$, $\varepsilon|_{\Omega_i} \in \mathcal{C}^1(\overline{\Omega_i})$ for all $i \in \{1, \ldots, n\}$.
			Then the following error estimate holds:
			\begin{equation}
				\label{eq:main:estimate}
				\begin{split}
					\|u^*-u_h^*\|_{h}
					\leq &
						\|u^*-u^*_I\|_{h}+\|u^*_I-u_h^*\|_{h}
					\\
					\leq &
						C \Bigg(
							\sum_{i=1}^N \Big( h_i^{2k} + \sum_{\Omega_l \in \nb(\Omega_i)} \frac{h_i^{2k+1}}{h_l}
							\Big) |u^*_i|_{H^{k+1}(\Omega_i)}^2
						\Bigg)^{1/2}
					.
				\end{split}
			\end{equation}
		\end{enumerate}
	\end{theorem}
	\begin{remark}
		\label{remark:main:estimate}
		If additionally we assume that $h_i := c_i h$ for every $\Omega_i \in \mathcal{E}$, then estimate \eqref{eq:main:estimate} reduces to
		\begin{equation}
			\begin{split}
				\|u^*-u_h^*\|_{h}
				\leq &
					C h^k \Big(
						\sum_{i=1}^N |u^*_i|_{H^{k+1}(\Omega_i)}^2 \Big)^{1/2}
				.
			\end{split}
		\end{equation}
	\end{remark}
	Existence and uniqueness of the discrete problem are shown in Sections \ref{sec:existence} and \ref{sec:uniqueness}, respectively. Then the error estimate is derived in Section \ref{sec:error:estimates}.

\section{Existence}
	\label{sec:existence}

	We define $P:X_h(\Omega) \rightarrow X_h^*(\Omega)$ as
	\begin{equation}
		\label{eq:def:P:oper}
		P(u_h) \varphi_h :=
			a_{h}\big(u_h, \varphi_h\big)
			+ b \big(u_h, \varphi_h\big)
			- f_{h}\big(\varphi_h\big)
			.
	\end{equation}
	We would like to use the following consequence of the Brouwer theorem \cite{Book_Girault_FiniElem,Book_Lions_QuelMeth}:
	\begin{theorem}
		\label{thm:brouwer:variant}
		Let $P:X \rightarrow X^*$ be a continuous function on a finite-dimensional normed real vector space $X$, such that for suitable $\rho>0$ we have
		\begin{equation}
			P(x) x \geq 0
			\quad
			\forall \|x\| \geq \rho.
		\end{equation}
		Then there exists $x \in X$ such that
		\begin{equation}
			P(x) = 0.
		\end{equation}
	\end{theorem}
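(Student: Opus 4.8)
The plan is to reproduce the classical deduction of this statement from Brouwer's fixed point theorem by a contradiction argument; the result is standard (it is the version quoted from the cited monograph), so I expect no deep obstacle, only a couple of bookkeeping points. The first preliminary is to turn the map $P : X \to X^*$ into a continuous self-map of a closed ball in $X$, which is what Brouwer's theorem needs. Since $X$ is finite-dimensional, I would fix an inner product $(\cdot,\cdot)$ whose induced norm is the norm $\|\cdot\|$ of the statement — in the application $\|\cdot\| = \|\cdot\|_{h,\sigma_r}$ is itself defined through a quadratic form, so such an inner product is readily available. By the Riesz representation there is a linear isomorphism $R : X^* \to X$ with $\phi(v) = (R\phi, v)$ for all $v \in X$; writing $\tilde P := R \circ P$ gives a continuous map $\tilde P : X \to X$ with
\[
  (\tilde P(x), x) = P(x)\,x \quad\text{and}\quad \tilde P(x) = 0 \iff P(x) = 0 .
\]

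Next I would argue by contradiction. Suppose $P(x) \neq 0$, equivalently $\tilde P(x) \neq 0$, for every $x$ in the closed ball $\overline{B}_\rho := \{ x \in X : \|x\| \leq \rho \}$. Then
\[
  g : \overline{B}_\rho \to \overline{B}_\rho, \qquad g(x) := -\rho\,\frac{\tilde P(x)}{\|\tilde P(x)\|} ,
\]
is well defined and continuous and maps $\overline{B}_\rho$ into the sphere $\{\|x\| = \rho\} \subseteq \overline{B}_\rho$. As $\overline{B}_\rho$ is a nonempty compact convex subset of a finite-dimensional space, Brouwer's fixed point theorem provides $x_0 \in \overline{B}_\rho$ with $g(x_0) = x_0$, and necessarily $\|x_0\| = \|g(x_0)\| = \rho$.

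Finally I would exploit the sign hypothesis. Substituting the fixed-point identity gives
\[
  P(x_0)\,x_0 = (\tilde P(x_0), x_0) = -\frac{\rho}{\|\tilde P(x_0)\|}\,(\tilde P(x_0), \tilde P(x_0)) = -\rho\,\|\tilde P(x_0)\| < 0 ,
\]
the strict sign coming from $\tilde P(x_0) \neq 0$. But $\|x_0\| = \rho$, so the hypothesis $P(x)\,x \geq 0$ for $\|x\| \geq \rho$ forces $P(x_0)\,x_0 \geq 0$, a contradiction. Hence the assumption was false and there exists $x \in \overline{B}_\rho$, that is $\|x\| \leq \rho$, with $P(x) = 0$. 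The only points demanding care are the identification $X^* \cong X$ used to phrase the self-map (the genuine, if minor, obstacle), the verification that $g$ is a bona fide continuous self-map of the ball — which is exactly where the contradiction hypothesis $P \neq 0$ enters — and the observation that, although the hypothesis is stated for all $\|x\| \geq \rho$, only its restriction to the sphere $\|x\| = \rho$ is actually used.
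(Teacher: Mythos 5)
The paper does not actually prove this theorem: it is quoted as a known result from the cited monograph of Lions, so there is no internal proof to compare yours against. Your argument is the classical deduction from Brouwer's fixed point theorem and is, in substance, the proof found in that reference; it is correct. The one point to flag is your opening step, where you fix an inner product whose induced norm is the norm $\|\cdot\|$ of the statement. The theorem is stated for an arbitrary finite-dimensional normed space, and a general norm (e.g.\ the sup norm) fails the parallelogram law and hence is induced by no inner product, so as written your proof covers only inner-product norms. This restriction is harmless twice over. First, in this paper's application the norm $\|\cdot\|_{h,\sigma_r}$ is a quadratic-form norm, as you note. Second, the proof repairs itself in full generality: take any auxiliary inner product $(\cdot,\cdot)$ on $X$ (one exists by finite-dimensionality) for the Riesz identification $\tilde P := R\circ P$, but normalize $g(x) := -\rho\,\tilde P(x)/\|\tilde P(x)\|$ with the given norm. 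Then $g$ still maps the compact convex ball $\overline{B}_\rho$ of the given norm into itself, Brouwer still yields a fixed point $x_0$ with $\|x_0\|=\rho$, and the final computation becomes
\begin{equation*}
  P(x_0)\,x_0=(\tilde P(x_0),x_0)=-\frac{\rho}{\|\tilde P(x_0)\|}\,\bigl(\tilde P(x_0),\tilde P(x_0)\bigr)<0,
\end{equation*}
where strict negativity needs only $(\tilde P(x_0),\tilde P(x_0))>0$, not the identity between the inner product and $\|\cdot\|^2$. With that adjustment your argument establishes the theorem exactly as stated.
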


	We need the following lemma, which is a simple consequence of the Schwarz inequality and the Cauchy's $\epsilon$-inequality (see also \cite{ComputMethApplMath-2003-3-76}):
	\begin{lemma}
		\label{lem:broken:koerc:symm}
		There exist $\sigma_0 > 0$ and $c > 0$, such that for every  $\sigma_e \geq \sigma_0$ and $u_h \in X_h(\Omega)$
		\begin{equation}
			c \|u_h\|_{h}^2 \leq a_{h}(u_h, u_h).
		\end{equation}
		Constant $\sigma_0$ depends on $\varepsilon_m, \varepsilon_M$ and the maximal number of edges of elements in coarse grid $\mathcal{E}$.
	\end{lemma}

	Let $C := \max \{\|\hat{v}\|_{L_\infty(\Omega)}, \|\hat{w}\|_{L_\infty(\Omega)}\}$.
	Then we may decompose $b(u_h, u_h)$ as
	\begin{equation}
		\begin{split}
			b\big(u_h, u_h\big)
			= &
				\int_{\Omega}
					\Big(
						e^{u_h - \hat{v}}
						-
						e^{\hat{w} - u_h}
					\Big)
					u_h
				dx
			\\
			= &
				\int_{\Omega}
					\Big(
						e^{u_h - \hat{v}}
						-
						e^{\hat{w} - u_h}
					\Big)
					u_h
					\chi_{\{x \in \Omega : |u_h (x)| > C\}}
				dx
			\\ &
				+
				\int_{\Omega}
					\Big(
						e^{u_h - \hat{v}}
						-
						e^{\hat{w} - u_h}
					\Big)
					u_h
					\chi_{\{x \in \Omega : |u_h (x)| \leq C\}}
				dx
			.
		\end{split}
	\end{equation}
	The first integral is non-negative, and the latter we can estimate from below
	\begin{equation}
		\label{eq:exist:c3}
		\int_{\Omega}
			\Big(
				e^{u_h(x) - \hat{v}(x)}
				-
				e^{\hat{w}(x) - u_h(x)}
			\Big)
			u_h(x)
			\chi_{\{x \in \Omega : |u_h(x)| \leq C\}}(x)
		dx
		\geq
			- |\Omega|
			2 e^{2 C}
			C
		.
	\end{equation}
	To estimate $f_{h}(u_h)$ we first use lemma \ref{lemma:interface:trace:estimates} and the trace inequality
	\begin{equation}
		\begin{split}
			\Big|
				\sum_{e \in \Gamma_{D}} \int_e \{\varepsilon \nabla u_h 
				 \cdot \nu\} [\hat{u}] \, ds
			\Big|
			& \leq
				\varepsilon_M \sum_{e \in \Gamma_{D}} \| \nabla u_h\|_{L_2(e)} \|\hat{u}\|_{L_2(e)}
			\\ &
			\leq
				c \varepsilon_M \sum_{i=1}^N h_i^{-1/2} \| \nabla u_h\|_{L_2(\Omega_i)} \|\hat{u}\|_{L_2(\Omega_i)}
			\\ &
			\leq
				C \| u_h\|_{h} \|\hat{u}\|_{H_1(\Omega)}
			,
		\end{split}
	\end{equation}
	where $C$ depends on $\varepsilon_M$ and $h$.
	Then using the Schwarz inequality, we obtain
	\begin{equation}
		\label{eq:exist:c2}
		- f_{h}\big(u_h\big)
		\geq - c (\hat{u}, k_1, h) \|u_h\|_{h}
		.
	\end{equation}
	Therefore by lemma \ref{lem:broken:koerc:symm} and by \eqref{eq:exist:c3}, \eqref{eq:exist:c2} we get
	\begin{equation}
		\begin{split}
			P(u_h) u_h
			\geq
				c_1 \|u_h\|_{h}^2
				- c_2 \|u_h\|_{h}
				- c_3
			,
		\end{split}
	\end{equation}
	where $c_1, c_2, c_3 \in \mathbb{R}$ are some positive constants independent of $u_h$.
	It is therefore clear that for $\|u_h\|_{h}$ large enough, we have that $P(u_h) u_h \geq 0$.
	Then by theorem \ref{thm:brouwer:variant} there exists some $u_h^* $, such that $P(u_h^*) = 0$.

\section{Uniqueness}
	\label{sec:uniqueness}

	Assume that there exist two solutions $u_h^*, v_h^* \in X_h(\Omega)$ of equation \eqref{eq:std:symm:problem}.
	Then taking $\varphi_h := u_h^* - v_h^*$ and subtracting \eqref{eq:std:symm:problem} for $u_h^*$ and $v_h^*$ we obtain
	\begin{equation}
		\begin{split}
			a_{h}(u_h^* - v_h^*, u_h^* - v_h^*)
			& =
				\sum_{i=1}^N \int_{\Omega_i} e^{-\hat{v}} \Big(
					e^{v_h^*}
					- e^{u_h^*}
				\Big)
				\Big( u_h^* - v_h^* \Big)\, dx
			\\ &
				+
				\sum_{i=1}^N \int_{\Omega_i} e^{\hat{w}} \Big(
					e^{ - u_h^*}
					- e^{ - v_h^*} 
				\Big)
				\Big( u_h^* - v_h^* \Big)\, dx
				.
		\end{split}
	\end{equation}
	By the monotonicity of the exponential function, the right hand side is nonpositive.
	On the other hand by lemma \ref{lem:broken:koerc:symm} we have
	\begin{equation}
			0
			 <
				c \|u_h^* - v_h^*\|_{h}^2
			 \leq 
				 a_{h}(u_h^* - v_h^*, u_h^* - v_h^*).
	\end{equation}
	Thus
	$
			0 < \|u_h^* - v_h^*\|_{h}^2 \leq 0
	$
	since $u_h^* \neq v_h^*$, and we have a contradiction.

\section{Interpolation operator}
	\label{sec:interpolation:convergence:2d}

	For any $\Omega_i \in \mathcal{E}$ let $\Pi_{h_i} : H^{k+1}(\Omega_i) \rightarrow X_{h_i} \subset \mathcal{C}^0(\overline{\Omega_i})$ be a standard piecewise-polynomial continuous interpolation operator.
	Then we define $\Pi_h: H^{k+1}(\mathcal{E}) \rightarrow X_h$ by
	\begin{equation}
		\forall \Omega_i \in \mathcal{E}
		\quad
		\Big( \Pi_h u \Big)_i := \Pi_{h_i} u_i.
	\end{equation}
	On any $\Omega_i$, we can use standard interpolation estimate for FEM \cite{Book-Brenner-2008}:
	\begin{equation}
		\begin{split}
			\label{eq:std:inter:res}
			\|u_i - \Pi_{h_i} u_i \|_{L_2(\Omega_i)}
			+
			h_i \|u_i - \Pi_{h_i} u_i \|_{H^1(\Omega_i)}
			& \leq
				C h_i^{k+1} |u_i|_{H^{k+1}(\Omega_i)}.
		\end{split}
	\end{equation}
	Let further $u_I := \Pi_h u$.

	\begin{lemma}
		\label{lemma:estimate:inter:e}
		Let $u \in H^{k+1}(\mathcal{E})$, $u_I := \Pi_h u$. For any $\Omega_i \in \mathcal{E}$ and for any $e \in \Gamma_i$
		\begin{eqnarray}
			\big\| u_i - u_{I,i} \big\|_{L_2(e)}
			& \leq &
				C h_i^{k+1/2}
				| u_i |_{H^{k+1}(\Omega_i)}
			,
			\\
			\big| u_i - u_{I,i} \big|_{H^1(e)}
			& \leq &
				C h_i^{k-1/2}
				| u_i |_{H^{k+1}(\Omega_i)}
			.
		\end{eqnarray}
	\end{lemma}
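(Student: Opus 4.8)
The plan is to reduce the trace estimates on the \emph{coarse} edge $e$ to the standard finite-element interpolation bound (\ref{eq:std:inter:res}) on $\Omega_i$ by applying a scaled trace inequality on the \emph{fine} elements of $\mathcal{T}_{h_i}$. Write $w := u_i - u_{I,i}$ and observe that $e$ is the union of those fine edges $e'$ that are faces of fine elements $\tau \in \mathcal{T}_{h_i}$ lying against $e$. Since the family $\{\mathcal{T}_{i,h_i}\}$ is regular and uniform, on each such $\tau$ the scaled trace inequality
\begin{equation}
	\|v\|_{L_2(\partial \tau)}^2
	\leq
		C \Big( h_i^{-1} \|v\|_{L_2(\tau)}^2 + h_i |v|_{H^1(\tau)}^2 \Big)
\end{equation}
holds for every $v \in H^1(\tau)$, with $C$ independent of $h_i$; this is precisely the ingredient behind lemma \ref{lemma:interface:trace:estimates}, now used for a generic Sobolev function rather than a discrete one.

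For the first estimate I would apply the inequality with $v = w$, restrict the boundary integral to the portion of $\partial \tau$ lying on $e$, and sum over the fine elements adjacent to $e$. Because every $h_\tau$ is comparable to $h_i$ and the summed right-hand sides are dominated by the full-domain quantities, this yields
\begin{equation}
	\|w\|_{L_2(e)}^2
	\leq
		C \Big( h_i^{-1} \|w\|_{L_2(\Omega_i)}^2 + h_i |w|_{H^1(\Omega_i)}^2 \Big).
\end{equation}
Substituting the two halves of (\ref{eq:std:inter:res}), namely $\|w\|_{L_2(\Omega_i)} \leq C h_i^2 |u_i|_{H^2(\Omega_i)}$ and $|w|_{H^1(\Omega_i)} \leq C h_i |u_i|_{H^2(\Omega_i)}$, collapses both terms to $C h_i^3 |u_i|_{H^2(\Omega_i)}^2$, and a square root gives the claimed $O(h_i^{3/2})$ bound.

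For the seminorm estimate I would first bound the tangential derivative along $e$ by the full gradient, $|w|_{H^1(e)}^2 = \int_e |\partial_t w|^2\, ds \leq \int_e |\nabla w|^2\, ds = \|\nabla w\|_{L_2(e)}^2$, which is legitimate since $w \in \mathcal{C}^0(\overline{\Omega_i})$ is piecewise $H^2$ along $e$, and then apply the scaled trace inequality componentwise to $\nabla w$ on each fine element. The key simplification here is that $u_{I,i}$ is piecewise linear, so its second derivatives vanish on every $\tau$ and hence $|\nabla w|_{H^1(\tau)} = |w|_{H^2(\tau)} = |u_i|_{H^2(\tau)}$; summing over the fine elements adjacent to $e$ therefore gives
\begin{equation}
	\|\nabla w\|_{L_2(e)}^2
	\leq
		C \Big( h_i^{-1} |w|_{H^1(\Omega_i)}^2 + h_i |u_i|_{H^2(\Omega_i)}^2 \Big).
\end{equation}
Inserting $|w|_{H^1(\Omega_i)} \leq C h_i |u_i|_{H^2(\Omega_i)}$ renders both terms $C h_i |u_i|_{H^2(\Omega_i)}^2$, and a square root yields the $O(h_i^{1/2})$ bound.

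The substitutions of (\ref{eq:std:inter:res}) are routine; the only points requiring care are the \emph{bookkeeping of the powers of $h_i$} and the observation that the piecewise $H^2$-seminorm of $w$ on the fine mesh coincides with $|u_i|_{H^2(\Omega_i)}$ because the interpolant is piecewise linear. Once the scaled trace inequality is granted on each fine simplex, no further analytic input is needed.
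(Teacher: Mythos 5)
Your proof is correct and follows essentially the same route as the paper: decompose the coarse edge $e$ into the fine edges of $\mathcal{T}_{h_i}$, apply the scaled trace inequality on each fine element, sum, and insert the standard interpolation estimate (\ref{eq:std:inter:res}). Your treatment of the $H^1(e)$ seminorm (applying the trace inequality to $\nabla(u_i - u_{I,i})$ and noting that the piecewise-linear interpolant contributes nothing to the $H^2$ seminorm) is exactly the ``analogous'' argument the paper leaves implicit.
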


	\begin{proof}
		For fixed $e$ and $\Omega_i$, we have
		$
				\|u_i - u_{I,i} \|_{L_2(e)}^2
				 =
					\sum_{\tau \in \mathcal{T}_{h_i,e}} 
						\|u_i - u_{I,i} \|_{L_2(e \cap \tau)}^2
		$.
		Note that on a single triangulation element $\tau$ we have that $u_i - u_{I,i} \in H^2(\tau)$, so using the trace inequality (see \cite{Book-Riviere-2008}) for $H^2(\tau)$ functions we have
		\begin{equation}
			\begin{split}
				\|u_i - u_{I,i} \|_{L_2(e \cap \tau)}
				& = 
					C \Big(
						 h_i^{-1/2} \|u_i - u_{I,i} \|_{L_2(\tau)}
						+
						h_i^{1/2} \big|u_i - u_{I,i} \big|_{H_1(\tau)}
						\Big)
						.
			\end{split}
		\end{equation}
		Then by \eqref{eq:std:inter:res} it follows that
		\begin{equation}
			\label{eq:l2:interp:est}
			\begin{split}
				\|u_i - u_{I,i} \|_{L_2(e)}
				& \leq
					C h_i^{-1/2} \Big(
						\|u_i - u_{I,i} \|_{L_2(\Omega_i)}
						+
						h_i
							\big\|u_i - u_{I,i}\big\|_{H^1(\Omega_i)}
						\Big)
				\\ & \leq
					C h_i^{k+1/2} |u_i|_{H^{k+1}(\Omega_i)}
				.
			\end{split}
		\end{equation}
		Proof of the latter estimate is analogous.
	\end{proof}

	Let us take any $e \in \Gamma_{DI}$. For $e \in \Gamma_I$ we assume that $e = \Omega_j \cap \Omega_l$ for some $\Omega_j, \Omega_l \in \mathcal{E}$ and by the triangle inequality we have
	\begin{equation}
		\begin{split}
			\int_e [ u - u_I ]^2\, ds
			& =
				\| [ u - u_I ] \|_{L_2(e)}^2
			\leq
				2 \sum_{i \in \{j,l\}} \|u_i - u_{I,i} \|_{L_2(e)}^2
				,
		\end{split}
	\end{equation}
	while for $e \in \Gamma_D$ we have $e \in \Gamma_i$ for some $\Omega_i \in \mathcal{E}$ and simply
	\begin{equation}
		\int_e [ u - u_I ]^2\, ds
		= 
			\int_e \Big( u_i - u_{I,i} \Big)^2\, ds
		= 
			\|u_i - u_{I,i} \|_{L_2(e)}^2
		.
	\end{equation}
	Therefore it is sufficient to estimate
	$
		\|u_i - u_{I,i} \|_{L_2(e)}^2,
	$
	for any $e \in \Gamma_{DI}$, $e \subset \partial \Omega_i$, using lemma \ref{lemma:estimate:inter:e}.
	Let $e \in \Gamma_D$.
	By \eqref{eq:l2:interp:est} we have
	\begin{equation}
		\begin{split}
			\eta_{e} \int_e \Big( u_i - u_{I,i} \Big)^2\, ds
				& = 
					\sigma_e h_i^{-1} \|u_i - u_{I,i} \|_{L_2(e)}^2
				\\ &
				\leq
					C \sigma_e h_i^{-1} h_i^{2k+1} |u_i|_{H^{k+1}(\Omega_i)}^2
				=
					C \sigma_e h_i^{2k} |u_i|_{H^{k+1}(\Omega_i)}^2
					.
		\end{split}
	\end{equation}
	On the other hand, if $e \in \Gamma_I$ then
	\begin{equation}
		\begin{split}
			\eta_{e} \int_e \Big( u_j - u_{I,j} \Big)^2\, ds
				&
				=
					0.5 \sigma_e (h_j^{-1} + h_l^{-1}) \|u_j - u_{I,j} \|_{L_2(e)}^2
				\\ &
				\leq
					C \sigma_e \Big(h_j^{2k} + \frac{h_j^{2k+1}}{h_l}\Big) |u_j|_{H^{k+1}(\Omega_j)}^2
					.
		\end{split}
	\end{equation}
	Then if we sum up over $e \in \Gamma_{DI}$ 
	\begin{equation}
		\sum_{e \in \Gamma_{DI}} \eta_{e} \int_e \Big( [ u - u_I ] \Big)^2\, ds
		\leq
			\sum_{i=1}^N
				C \Bigg(h_i^{2k} + \sum_{\Omega_l \in \nb(\Omega_i)} \frac{h_i^{2k+1}}{h_l}\Bigg) |u_i|_{H^{k+1}(\Omega_i)}^2
				.
	\end{equation}
	Thus taking into account this estimate and \eqref{eq:std:inter:res}
	\begin{equation}
		\label{eq:inter:est:2d:csipg}
		\begin{split}
			\|u - u_I\|_{h}^2
			\leq
				C \sum_{i=1}^N
					\Bigg(h_i^{2k} + \sum_{\Omega_l \in \nb(\Omega_i)} \frac{h_i^{2k+1}}{h_l}\Bigg) |u_i|_{H^{k+1}(\Omega_i)}^2.
		\end{split}
	\end{equation}
	If we increase density proportionally, i.e., $h_i := c_i h$, the result can be improved to
	\begin{equation}
		\begin{split}
			\|u - u_I\|_{h}^2
			\leq
				C h^{2k} \sum_{i=1}^N |u_i|_{H^{k+1}(\Omega_i)}^2.
		\end{split}
	\end{equation}

\section{Error estimates}
	\label{sec:error:estimates}

	We start with the following auxiliary lemma.
	\begin{lemma}
		\label{lemma:poincare:auxiliary}
		Let $u \in H^s(\mathcal{E})$, $s \geq 1$. Then
		\begin{equation}
			\|u\|_{L_2(\Omega)}^2
			\leq
				C \Big[
					\sum_{i=1}^N \int_{\Omega_i} \big( \nabla u \big)^2\, dx
					+
					\sum_{e \in \Gamma_I} |e|^{-1} \int_e [u]^2\, ds
					+
					\sum_{e \in \Gamma_D} \int_{e} u^2\, ds
				\Big]
				.
		\end{equation}
	\end{lemma}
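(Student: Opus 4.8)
The plan is to reduce the estimate to a finite-dimensional statement about the averages of $u$ over the coarse cells, exploiting that the coarse grid $\mathcal{E}$ is fixed: the number $N$, the cell measures $|\Omega_i|$ and the edge lengths $|e|$ are all constants depending only on $\mathcal{E}$, so every geometric constant below is absolute. Throughout I assume $\Gamma_D \neq \emptyset$, which is necessary for the inequality to hold at all, since otherwise a nonzero constant makes the right-hand side vanish; in the present setting $\Sigma_D$ is nonempty, and because $\Omega$ is a rectangle tiled by the subrectangles $\{\Omega_i\}$, the dual graph of $\mathcal{E}$ (cells as vertices, interfaces $e \in \Gamma_I$ as edges) is connected.

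First I would set $\bar{u}_i := |\Omega_i|^{-1}\int_{\Omega_i} u\,dx$ and split
\begin{equation}
\|u\|_{L_2(\Omega)}^2
= \sum_{i=1}^N \|u\|_{L_2(\Omega_i)}^2
\leq 2\sum_{i=1}^N \|u - \bar{u}_i\|_{L_2(\Omega_i)}^2
+ 2\sum_{i=1}^N |\Omega_i|\,\bar{u}_i^2.
\end{equation}
The first sum is controlled by the mean-value Poincar\'e inequality on each fixed cell, $\|u-\bar{u}_i\|_{L_2(\Omega_i)}^2 \leq C\int_{\Omega_i}(\nabla u)^2\,dx$, so the whole problem reduces to bounding $\sum_i \bar{u}_i^2$ by the right-hand side. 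For this I would record the trace estimate on each cell, obtained from the trace theorem on $\Omega_i$ combined with the above Poincar\'e bound: for every $e \subset \partial\Omega_i$,
\begin{equation}
\|u - \bar{u}_i\|_{L_2(e)}^2 \leq C\int_{\Omega_i}(\nabla u)^2\,dx.
\end{equation}

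Next I would propagate the averages across the partition. For an interface $e = \partial\Omega_i \cap \partial\Omega_j \in \Gamma_I$, averaging the identity $\bar{u}_i - \bar{u}_j = (\bar{u}_i - u_i) + [u] + (u_j - \bar{u}_j)$ over $e$ and applying the Schwarz inequality gives
\begin{equation}
(\bar{u}_i - \bar{u}_j)^2
\leq C|e|^{-1}\Big(\|u_i - \bar{u}_i\|_{L_2(e)}^2 + \|[u]\|_{L_2(e)}^2 + \|u_j - \bar{u}_j\|_{L_2(e)}^2\Big),
\end{equation}
which, together with the trace estimate, is exactly where the weight $|e|^{-1}$ on the jump term originates. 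Similarly, for a Dirichlet edge $e \subset \partial\Omega_i \cap \Sigma_D$, averaging $\bar{u}_i = (\bar{u}_i - u) + u$ over $e$ yields $\bar{u}_i^2 \leq C\big(\int_{\Omega_i}(\nabla u)^2\,dx + \int_e u^2\,ds\big)$, anchoring the average of any Dirichlet-touching cell.

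Finally I would chain these estimates along the connected dual graph: fixing one cell that carries a Dirichlet edge and joining every other cell to it by a path of interfaces of length at most $N$, I write each $\bar{u}_k$ as a telescoping sum of the anchor plus the pairwise interface differences, square, and sum over $k$; since $N$ and the combinatorics of the fixed grid are bounded, this expresses $\sum_i \bar{u}_i^2$ as a finite combination of the gradient, jump and Dirichlet terms and closes the argument. \textbf{The main obstacle} is precisely this chaining step: one must guarantee that every cell is reachable from the Dirichlet data through interfaces and that the accumulated constants do not degenerate, both of which rest on connectedness of the dual graph and on $\mathcal{E}$ being a fixed finite partition. If one only needs the existence of $C$, a cleaner alternative is a compactness argument by contradiction: a normalized sequence with vanishing right-hand side is bounded in each $H^1(\Omega_i)$, hence precompact in $L_2(\Omega)$, and its limit is piecewise constant with vanishing interface jumps and Dirichlet trace, forcing it to be identically zero, a contradiction.
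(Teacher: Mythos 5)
Your proof is correct, but it cannot be compared step-by-step with the paper's, because the paper does not actually prove this lemma: it is stated and then delegated entirely to the citation of Brenner's Poincar\'e--Friedrichs inequalities for piecewise $H^1$ functions (SIAM J. Numer. Anal. 41 (2004), 306--324). Your self-contained argument --- cell averages, the mean-value Poincar\'e inequality plus a trace estimate on each fixed cell, the identity $\bar{u}_i - \bar{u}_j = (\bar{u}_i - u_i) + [u] + (u_j - \bar{u}_j)$ on interfaces, anchoring at a Dirichlet edge, and chaining along the connected dual graph --- is sound, and your two side remarks isolate exactly the right hypotheses: $\Gamma_D \neq \emptyset$ is genuinely necessary (otherwise constants annihilate the right-hand side), and connectedness of the dual graph of $\mathcal{E}$ is what makes the propagation of averages possible. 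The trade-off between your route and the cited one is worth noting. Your constants degrade with the combinatorics of the partition (path lengths up to $N$, factors $|e|^{-1}$ and $|\Omega_i|^{-1}$), which is harmless here because the paper keeps the coarse grid $\mathcal{E}$ fixed and independent of $h$ --- the lemma is only ever applied with jumps across coarse edges. Brenner's result is stronger in precisely the respect your argument does not address: it gives constants uniform over families of partitions, with the weights $|e|^{-1}$ scaled so that the inequality is robust as the partition is refined; that uniformity would be indispensable if the lemma were instead invoked on the fine triangulation $\mathcal{T}_h$. Your closing compactness alternative is also valid for a fixed partition, with the usual caveat that it yields no information about the constant.
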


	Proof of lemma \ref{lemma:poincare:auxiliary} may be found in \cite{SIAMJNumerAnal-2004-41-306}.

	Then we would like to have an analog of a Poincare inequality for the $H^s(\mathcal{E})$ spaces.
	\begin{lemma}
		\label{lemma:poincare:broken:norm:2d}
		Let $u \in H^s(\mathcal{E})$, $s \geq 1$. Then there exists some $h_0>0$, such that 
		$\|u\|_{L_2(\Omega)} \leq c \|u\|_{h}$ for $0<h \leq h_0$, where $c$ is independent of $h$.
	\end{lemma}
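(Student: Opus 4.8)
The plan is to derive the broken Poincaré inequality of Lemma~\ref{lemma:poincare:broken:norm:2d} directly from the auxiliary Lemma~\ref{lemma:poincare:auxiliary}, by bounding each of the three terms on the right-hand side of that auxiliary estimate by a constant multiple of $\|u\|_{h,\sigma_r}^2$. The first term, $\sum_{i=1}^N \int_{\Omega_i} (\nabla u)^2\,dx$, is immediate: since $\varepsilon \geq \varepsilon_m > 0$, we have $\sum_i \int_{\Omega_i} (\nabla u)^2\,dx \leq \varepsilon_m^{-1} \sum_i \int_{\Omega_i} \varepsilon (\nabla u)^2\,dx \leq \varepsilon_m^{-1}\|u\|_{h,\sigma_r}^2$. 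So the work reduces to controlling the two boundary terms by the penalty part of the broken norm.

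For the Dirichlet term $\sum_{e\in\Gamma_D}\int_e u^2\,ds$, note that on such an edge $[u] = u_i$, so $\int_e u^2\,ds = \int_e [u]^2\,ds$. The penalty coefficient satisfies $\eta_{r,e} = 2\sigma_e h_i^{-r} \geq 2\sigma_0 h_i^{-r} \geq 2\sigma_0 h_0^{-r} \geq 2\sigma_0$ because $h_i \leq h_0 \leq 1$ and $r \in \{1,2\}$. Hence $\int_e [u]^2\,ds \leq (2\sigma_0)^{-1}\eta_{r,e}\int_e [u]^2\,ds$, and summing gives a bound by $(2\sigma_0)^{-1}$ times the penalty sum. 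For the interface term $\sum_{e\in\Gamma_I}|e|^{-1}\int_e [u]^2\,ds$, I would compare $|e|^{-1}$ with $\eta_{r,e} = \sigma_e(h_j^{-r}+h_k^{-r})$. Since $e$ is an edge of both $\Omega_j$ and $\Omega_k$ in the coarse grid, and the fine edge length is at most $\min(h_j,h_k)$, one has $|e|^{-1} \gtrsim \max(h_j^{-1},h_k^{-1})$ — but this is the wrong direction; what is actually needed is $|e|^{-1} \leq C\,\eta_{r,e}$. The coarse edge $e$ has a fixed length independent of $h$ (the coarse grid $\mathcal{E}$ is fixed), so $|e|^{-1}$ is bounded by a constant, whereas $\eta_{r,e} \geq \sigma_0(h_j^{-r}+h_k^{-r}) \geq 2\sigma_0 h_0^{-r} \to \infty$ as $h_0\to 0$; thus for $h_0$ small enough $|e|^{-1}\leq \eta_{r,e}$ holds edge-by-edge.

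Collecting the three bounds yields $\|u\|_{L_2(\Omega)}^2 \leq C\,\bigl(\varepsilon_m^{-1} + \text{const}\bigr)\|u\|_{h,\sigma_r}^2$, with the constant depending on $\sigma_0$, the fixed coarse-edge lengths, and the choice of $h_0$, but not on $h$ itself; taking square roots gives the claim. The main subtlety, and where I would be most careful, is the interface term: one must verify that the comparison $|e|^{-1} \leq C\eta_{r,e}$ is uniform in $h$ and correctly exploits that the coarse grid $\mathcal{E}$ (and hence each coarse edge $e\in\Gamma_I$) is fixed independently of the refinement parameter $h$. This is precisely what forces the hypothesis $0 < h \leq h_0$ with a suitably small $h_0$, clarifying the role of $h_0$ promised in the remarks preceding the three coercivity/Poincaré lemmata. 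Since $r$ enters only through the exponent of $h_i^{-r}$, which only strengthens the penalty when $h_i \leq 1$, the argument is uniform across $r\in\{1,2\}$.
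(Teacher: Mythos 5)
Your proof is correct and takes essentially the same route as the paper's own: both apply Lemma~\ref{lemma:poincare:auxiliary}, absorb the gradient term via $\varepsilon \geq \varepsilon_m$, and control the edge terms by observing that the coarse edges $e$ are fixed independently of $h$ while $\eta_{r,e} \to \infty$ as $h \to 0$, so that $\eta_{r,e} \geq |e|^{-1}$ (and $\eta_{r,e}$ is bounded below on $\Gamma_D$) once $0 < h \leq h_0$. The only cosmetic differences are that you bound the Dirichlet penalty below by $2\sigma_0$ directly from $h_i \leq h_0 \leq 1$ where the paper simply shrinks $h_0$ so that $\eta_{r,e} \geq 1$, and that your momentary wrong-direction comparison on the interface term is correctly discarded in favor of the same fixed-coarse-edge argument the paper uses.
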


	\begin{proof}
		By definition of the broken norm \eqref{def:broken:norm:2d}, we have
		\begin{equation}
			\|u\|_{h}^2
			:=
				\sum_{i=1}^N \int_{\Omega_i} \varepsilon_i \Big( \nabla u \Big)^2\, dx
				+
				\sum_{e \in \Gamma_{DI}} \eta_{e} \int_e [ u ]^2\, ds
			.
		\end{equation}
		Note that $|e|$ does not depend on $h$ and $\eta_{e} \rightarrow \infty$ as $h \rightarrow 0$. Thus we can find $h_0>0$, such that $\eta_{e} \geq |e|^{-1}$ and $\eta_{e} \geq 1$ for any $0<h<h_0$ and then by lemma \ref{lemma:poincare:auxiliary}
		\begin{equation}
			\begin{split}
				\|u\|_{L_2(\Omega)}^2
				& \leq
					C \Big[
						\sum_{i=1}^N \int_{\Omega_i} \big( \nabla u \big)^2\, dx
						+
						\sum_{e \in \Gamma_I} |e|^{-1} \|[u]\|_{L_2(e)}^2
						+
						\sum_{e \in \Gamma_D} \|[u]\|_{L_2(e)}^2
					\Big]
				\\
				& \leq
					C \Big[
						\varepsilon_m^{-1} \sum_{i=1}^N \int_{\Omega_i} \varepsilon_i \big( \nabla u \big)^2\, dx
						+
						\sum_{e \in \Gamma_{DI}} \eta_{e} \int_e [u]^2\, ds
					\Big]
				\\
				& \leq
					C_1 \|u\|_{h}^2
				.
			\end{split}
		\end{equation}
	\end{proof}

	To prove error estimates of the proposed discretization, we would like to introduce the following assumptions:
	\begin{equation}
		u^*\in H^{1}(\Omega)\cap H^{k+1}(\mathcal{E}),
		\quad
		\varepsilon\in \left\{v \in L_\infty(\Omega): \forall i \in \{1,\ldots,n \} \, v|_{\Omega_i} \in \mathcal{C}^1(\overline{\Omega}) \right\}.
	\end{equation}

	\subsection{Consistency}

		We start with an abstract result. Let $f \in L_2(\Omega)$. We pose two problems. The first is the following: find $u^* \in H^1(\Omega)$ such that
		\begin{equation}
			\label{eq:con:probl1:gen}
			\begin{split}
				\int_\Omega \varepsilon \nabla u^* \cdot \nabla \varphi\, dx & = \int_\Omega f \varphi\, dx  \quad \forall \varphi \in  H^1_{0,\partial \Omega_D}(\Omega),\\
				u^* & =\hat{u}  \quad \mbox{on } \partial \Omega_D.
			\end{split}
		\end{equation}
		Second problem is posed in broken Sobolev space: find $u^* \in H^1(\mathcal{E})$, such that $\forall \varphi \in H^1(\mathcal{E}) \cap H^2(\mathcal{T}_h)$
		\begin{equation}
			\label{eq:con:probl2:gen}
			\begin{split}
				\sum_{i=1}^N \int_{\Omega_i} & \varepsilon \nabla u^* \cdot \nabla \varphi\, dx
				 -
				\sum_{e \in \Gamma_{DI}} \int_e \Big\{ \varepsilon \nabla u^* \cdot \nu \Big\} [\varphi]\, ds
			\\ & 
				-
				\sum_{e \in \Gamma_{DI}} \int_e \Big\{ \varepsilon \nabla \varphi \cdot \nu \Big\} [u^*]\, ds
				 +
				\sum_{e \in \Gamma_{DI}} \eta_{e} \int_e[u^*] [\varphi]\, ds
			 \\ &
			 =
				\sum_{i=1}^N \int_{\Omega_i} f \varphi\, dx
				-
				\sum_{e \in \Gamma_{D}} \int_e \Big\{ \varepsilon \nabla \varphi \cdot \nu \Big\} [\hat{u}]\, ds
				+
				\sum_{e \in \Gamma_D} \eta_{e} \int_e [\hat{u}] [\varphi]\, ds
				.
			\end{split}
		\end{equation}
		We would like to prove the following result.
		\begin{theorem}
			\label{thm:u:consistency:gen}
			Assume that the solution $u^*$ of problem \eqref{eq:con:probl1:gen} belongs to $H^1(\Omega) \cap H^2(\mathcal{E})$ and $\varepsilon \nabla u^* \in H^1(\mathcal{E})$. Then $u^*$ satisfies \eqref{eq:con:probl2:gen}.
			Conversely, if $u^* \in H^2(\mathcal{E}) \cap H^1(\Omega)$ is a solution of \eqref{eq:con:probl2:gen} and $\varepsilon \nabla u^* \in H^1(\mathcal{E})$, then it is also a solution of \eqref{eq:con:probl1:gen}. 
		\end{theorem}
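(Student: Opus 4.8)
Both implications follow from a single elementwise Green's formula, so I would first isolate that computation. For \emph{any} $u \in H^2(\mathcal{E}) \cap H^1(\Omega)$ with $\varepsilon \nabla u \in H^1(\mathcal{E})$ and any admissible $\varphi \in H^1(\mathcal{E}) \cap H^2(\mathcal{T}_h)$, I integrate $\sum_i \int_{\Omega_i} \varepsilon \nabla u \cdot \nabla \varphi$ by parts on each $\Omega_i$ and regroup the resulting edge integrals with the elementary identity $a_j b_j - a_k b_k = \{a\}[b] + [a]\{b\}$, applied with $a = \varepsilon \nabla u \cdot \nu$ and $b = \varphi$ on every $e = \partial\Omega_j \cap \partial\Omega_k$. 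Since $u \in H^1(\Omega)$ forces $[u] \equiv 0$ on each $e \in \Gamma_I$, all jump-of-$u$ contributions on interior edges disappear, the $\Gamma_{DI}$ flux terms of the formulation cancel the corresponding boundary terms on $\Gamma_I \cup \Gamma_D$, and a short rearrangement turns the difference of the two sides of (\ref{eq:con:probl2:gen}) into the master identity
\begin{equation}
	\begin{split}
		\lhs - \rhs
		&= -\sum_{i=1}^N \int_{\Omega_i} \Big(\nabla\cdot(\varepsilon\nabla u) + f\Big)\varphi\, dx
			+ \sum_{e\in\Gamma_I} \int_e [\varepsilon\nabla u\cdot\nu]\,\{\varphi\}\, ds
		\\ &\quad
			+ \sum_{e\in\Gamma_N} \int_e (\varepsilon\nabla u\cdot\nu)\,\varphi\, ds
			+ \sum_{e\in\Gamma_D} \int_e \Big( \xi\{\varepsilon\nabla\varphi\cdot\nu\} + \eta_{r,e}[\varphi] \Big)\,[u-\hat u]\, ds,
	\end{split}
\end{equation}
whose four groups measure, respectively, the interior PDE residual, the interface flux jump, the Neumann defect and the Dirichlet mismatch.

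\textbf{Forward implication.} If $u$ solves (\ref{eq:con:probl1:gen}), I extract the four facts that annihilate the groups above: testing with functions compactly supported in a single $\Omega_i$ gives $-\nabla\cdot(\varepsilon\nabla u) = f$ a.e.; testing with functions straddling one interface gives $[\varepsilon\nabla u\cdot\nu] = 0$ on $\Gamma_I$; testing with functions not vanishing on $\Sigma_N$ gives the natural condition $\varepsilon\nabla u\cdot\nu = 0$ on $\Sigma_N$; and the Dirichlet datum $u=\hat u$ on $\Sigma_D$ gives $[u-\hat u]=0$ on each $e\in\Gamma_D$. Substituting these into the master identity kills every group, so $\lhs=\rhs$ and $u$ satisfies (\ref{eq:con:probl2:gen}).

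\textbf{Converse implication.} Suppose now $u$ solves (\ref{eq:con:probl2:gen}), so the right-hand side of the master identity vanishes for all admissible $\varphi$. I recover the four conditions one at a time by localising $\varphi$: choosing $\varphi\in C^\infty_0(\Omega_i)$ isolates the volume term and yields $-\nabla\cdot(\varepsilon\nabla u)=f$ on each $\Omega_i$; choosing $\varphi$ supported across a single $e\in\Gamma_I$ and continuous there (so $[\varphi]=0$, $\{\varphi\}$ free) yields $[\varepsilon\nabla u\cdot\nu]=0$; choosing $\varphi$ supported near a Neumann edge yields $\varepsilon\nabla u\cdot\nu=0$ on $\Sigma_N$; and the Dirichlet group then forces $u=\hat u$ on $\Sigma_D$ (treated below). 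With flux continuity, the Neumann condition and the Dirichlet datum secured, a final global integration by parts of $\int_\Omega\varepsilon\nabla u\cdot\nabla\varphi$ against $\varphi\in H^1_{0,\Sigma_D}(\Omega)$ reproduces $\int_\Omega\varepsilon\nabla u\cdot\nabla\varphi=\int_\Omega f\varphi$, i.e.\ (\ref{eq:con:probl1:gen}).

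\textbf{Main obstacle.} The delicate point is extracting the Dirichlet condition in the converse, where on each $e\in\Gamma_D$ the surviving expression $\int_e \big(\xi\{\varepsilon\nabla\varphi\cdot\nu\} + \eta_{r,e}[\varphi]\big)[u-\hat u]\,ds$ couples both the trace $[\varphi]=\varphi_i$ and the normal-derivative trace $\{\varepsilon\nabla\varphi\cdot\nu\}$ of the test function; for CSIPG ($\xi=-1$) a single crude choice cannot suppress both at once. I would exploit the broken, element-local nature of the test space: on the element adjacent to $e$ one may prescribe the Cauchy data $\varphi_i|_e$ and $(\nabla\varphi\cdot\nu)|_e$ independently, so taking $\varphi_i|_e = 0$ with $(\nabla\varphi\cdot\nu)|_e$ ranging over a dense subset of $L_2(e)$ (or, for CWOPSIP where $\xi=0$, simply letting $\varphi_i|_e$ range over such a set) forces $[u-\hat u]=0$, hence $u=\hat u$ a.e.\ on $\Sigma_D$. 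The remaining work is routine regularity bookkeeping---justifying the edgewise traces, the elementwise divergence $\nabla\cdot(\varepsilon\nabla u)\in L_2$, and the Green formula---which is precisely what the hypotheses $u\in H^2(\mathcal{E})$ and $\varepsilon\nabla u\in H^1(\mathcal{E})$ supply.
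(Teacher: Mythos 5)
Your proposal is correct, and its forward half is essentially the paper's argument: the paper first proves lemma \ref{lem:elipt:rownowazn:gen}, converting the weak problem (\ref{eq:con:probl1:gen}) into the strong form (interior PDE, flux continuity $[\varepsilon\nabla u\cdot\nu]=0$ on $\Gamma_I$, Neumann condition), and then runs the same elementwise Green computation that you package into your master identity. The genuine divergence is in the converse, in two places. First, the paper never recovers the interior PDE, the flux continuity or the Neumann condition there: after recovering the Dirichlet datum it simply tests (\ref{eq:con:probl2:gen}) with $\varphi \in C^\infty_{0,\Sigma_D}(\overline{\Omega})$, for which $[\varphi]\equiv 0$ on $\Gamma_I$ and $\varphi\equiv 0$ on $\Gamma_D$, so the flux and penalty terms on the two sides cancel against each other (using $[u]=0$ on $\Gamma_I$ and $u=\hat u$ on $\Gamma_D$), and the weak identity (\ref{eq:con:probl1:gen}) drops out by density --- no second integration by parts and no localisation on $\Gamma_I$ or $\Gamma_N$ is needed, whereas your route requires all of these. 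Second, for the Dirichlet recovery the paper makes the opposite choice of Cauchy data: a family $\varphi_\epsilon$ with prescribed trace $\bar\varphi$ on $e$, \emph{vanishing} normal derivative on $\partial\Omega_i$, and $\|\varphi_\epsilon\|_{L_2(\Omega)}\to 0$. Killing the normal-derivative trace eliminates the $\xi$-terms for both $\xi=0$ and $\xi=-1$ at once, so no case split between CWOPSIP and CSIPG is needed, and the shrinking $L_2$-norm makes the volume terms vanish in the limit, so the Dirichlet condition can be extracted \emph{before} (indeed, without) knowing the interior PDE --- your ordering constraint (PDE first, so the volume residual dies in the localised tests) disappears. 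What your organisation buys in exchange is a cleaner bookkeeping: the single master identity displays the four defect groups explicitly, makes both implications a matter of reading off which group vanishes, and yields the full strong form of the problem in the converse rather than only the weak one. Both arguments are sound; yours is somewhat longer where the paper's is shortest.
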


		The proof presented in this paper is based on the standard approach in Discontinuous Galerkin Method, cf. e.g., \cite{Book-Riviere-2008}.



		\begin{lemma}
			\label{lem:elipt:rownowazn:gen}
			Let $u \in H^1(\Omega) \cap H^2(\mathcal{E})$, $\varepsilon \nabla u \in \big(H^1(\mathcal{E})\big)^2$, $0<\varepsilon_m\leq \varepsilon \leq \varepsilon_M$ and $f \in L_2(\Omega)$.
			The following statements are equivalent:
			\begin{itemize}
				\item 
					$u$ satisfy:
					\begin{equation}
						\label{eq:elipt:rownowazn:gen:1}
						\begin{split}
							\int_\Omega \varepsilon \nabla u \cdot \nabla \varphi &= \int_\Omega f \varphi,
							\quad
							\forall \varphi \in H^1_{0,\partial \Omega_D}(\Omega)
							,
						\end{split}
					\end{equation}
				\item
					$u$ satisfy:
					\begin{equation}
						\label{eq:elipt:rownowazn:gen:2}
						\begin{split}
							- \sum_{i=1}^N \int_{\Omega_i} \nabla \cdot \Big( \varepsilon_i \nabla u_i \Big) \varphi_i &= \int_\Omega f \varphi,
							\quad
							\forall \varphi \in L_2(\Omega)
							\\
							\Big[\varepsilon \nabla u \cdot \nu \Big] |_e & = 0 \quad \forall e \in \Gamma_I,
							\\
							\nabla u \cdot \nu & = 0 \quad \mbox{on } \partial \Omega_N.
						\end{split}
					\end{equation}
			\end{itemize}
		\end{lemma}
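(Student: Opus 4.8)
The plan is to prove the equivalence of the classical weak formulation (\ref{eq:elipt:rownowazn:gen:1}) and the strong-plus-interface formulation (\ref{eq:elipt:rownowazn:gen:2}) by integrating by parts element-wise. The key structural fact I will exploit is that $u \in H^1(\Omega)$, so $u$ has no jumps across interfaces $\Sigma_I$, and that $\varepsilon \nabla u \in (H^1(\mathcal{E}))^2$, which is exactly the regularity needed to give meaning to the normal-flux traces $\varepsilon \nabla u \cdot \nu$ on each $\partial\Omega_i$ and hence to the flux jump $[\varepsilon \nabla u \cdot \nu]$ on $\Sigma_I$.

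Let me describe the forward direction, (\ref{eq:elipt:rownowazn:gen:1}) $\Rightarrow$ (\ref{eq:elipt:rownowazn:gen:2}). First I would test (\ref{eq:elipt:rownowazn:gen:1}) against $\varphi \in C^\infty_0(\Omega_i)$ supported in a single coarse element, which have zero trace on $\partial\Omega_i$; integrating by parts on $\Omega_i$ yields $-\nabla(\varepsilon_i \nabla u_i) = f$ in the distributional sense on each $\Omega_i$, giving the first equation of (\ref{eq:elipt:rownowazn:gen:2}) for all such $\varphi$, and then by density for all $\varphi \in L_2(\Omega)$. Next I would recover the interface and Neumann conditions: taking a general $\varphi \in H^1_{0,\Sigma_D}(\Omega)$, I integrate by parts element-wise in (\ref{eq:elipt:rownowazn:gen:1}), which produces the volume term $\sum_i \int_{\Omega_i} -\nabla(\varepsilon_i \nabla u_i)\varphi_i\,dx$ plus boundary contributions $\sum_i \int_{\partial\Omega_i} \varepsilon_i \nabla u_i \cdot \nu\, \varphi\, ds$. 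Since $u \in H^1(\Omega)$ means $\varphi$ is single-valued across each interface, the interface boundary terms collapse into $\sum_{e \in \Gamma_I} \int_e [\varepsilon \nabla u \cdot \nu]\varphi\,ds$, while edges in $\Gamma_N$ contribute $\int_e \varepsilon \nabla u \cdot \nu\,\varphi\,ds$ (edges in $\Gamma_D$ drop out because $\varphi|_{\Sigma_D}=0$). Using the already-established volume identity, all volume terms cancel against $\int_\Omega f\varphi$, leaving $\sum_{e\in\Gamma_I}\int_e [\varepsilon\nabla u\cdot\nu]\varphi\,ds + \sum_{e\in\Gamma_N}\int_e \varepsilon\nabla u\cdot\nu\,\varphi\,ds = 0$ for all admissible $\varphi$. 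A standard localization/density argument, choosing $\varphi$ supported near one interface edge and then near one Neumann edge, forces $[\varepsilon\nabla u\cdot\nu]=0$ on $\Sigma_I$ and $\nabla u\cdot\nu = 0$ on $\Sigma_N$ (the latter since $\varepsilon \geq \varepsilon_m > 0$).

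The converse direction runs the same integration by parts backwards: starting from (\ref{eq:elipt:rownowazn:gen:2}), for any $\varphi \in H^1_{0,\Sigma_D}(\Omega)$ I integrate the volume equation by parts on each $\Omega_i$, reassemble the boundary terms into jumps across $\Gamma_I$ and single traces on $\Gamma_N\cup\Gamma_D$, and then invoke $[\varepsilon\nabla u\cdot\nu]=0$ on $\Sigma_I$ together with $\nabla u\cdot\nu = 0$ on $\Sigma_N$ and $\varphi|_{\Sigma_D}=0$ to kill every boundary contribution, recovering (\ref{eq:elipt:rownowazn:gen:1}).

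The main obstacle is purely a matter of regularity bookkeeping rather than a deep difficulty: I must ensure that each integration by parts is justified, i.e.\ that the normal traces $\varepsilon_i \nabla u_i \cdot \nu \in L_2(\partial\Omega_i)$ are well-defined and that the divergence theorem applies on each $\Omega_i$. This is precisely guaranteed by the hypothesis $\varepsilon\nabla u \in (H^1(\mathcal{E}))^2$ via the trace theorem on each subdomain; the single-valuedness of test functions across $\Gamma_I$ (from $\varphi\in H^1(\Omega)$) is what makes the interface terms combine into the flux jump. I would be careful that the density argument identifying the pointwise interface and Neumann conditions uses test functions whose traces span a dense subset of $L_2$ on the relevant edges, which holds because $C^\infty_{0,\Sigma_D}(\overline\Omega)$ is dense in $H^1_{0,\Sigma_D}(\Omega)$ and its edge-traces are correspondingly rich.
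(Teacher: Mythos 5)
Your proposal is correct and follows essentially the same route as the paper: element-wise Green's formula, cancellation of volume terms via the distributional identity $-\nabla(\varepsilon_i\nabla u_i)=f$ on each $\Omega_i$, and a localization/density argument to extract the interface and boundary flux conditions. If anything, your treatment is more careful than the paper's, which recovers $[\varepsilon\nabla u\cdot\nu]=0$ on $\Gamma_I\cup\Gamma_N$ using only $C^\infty_0(\Omega)$ test functions (these vanish on $\partial\Omega$ and so cannot detect $\Sigma_N$), whereas you correctly note that the Neumann condition requires test functions in $H^1_{0,\Sigma_D}(\Omega)$ that are nonzero on $\Sigma_N$.
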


		\begin{proof}
			$\eqref{eq:elipt:rownowazn:gen:2} \Rightarrow \eqref{eq:elipt:rownowazn:gen:1}$ follows simply from the Green formula. 
			To prove $\eqref{eq:elipt:rownowazn:gen:1} \Rightarrow \eqref{eq:elipt:rownowazn:gen:2}$, take any $\varphi \in C_0^\infty(\Omega)$. Since $C_0^\infty(\Omega) \subset H_{0,\partial \Omega_D}^1(\Omega)$, then by \eqref{eq:elipt:rownowazn:gen:1} we have
			\begin{equation}
				\int_\Omega \varepsilon \nabla u \cdot \nabla \varphi = \int_\Omega f \varphi.
			\end{equation}
			By the Green formula
			\begin{equation}
				\begin{split}
					\int_\Omega f \varphi\, dx
					&=
					\sum_{i=1}^N
						\int_{\Omega_i} \varepsilon \nabla u \cdot \nabla \varphi\, dx
					\\ & =
					-\sum_{i=1}^N
						\int_{\Omega_i} \nabla \cdot \Big( \varepsilon \nabla u \Big) \varphi\, dx
					+\sum_{e \in \Gamma}
						\int_e [\varepsilon \nabla u \cdot \nu] \varphi\, ds
					.
				\end{split}
			\end{equation}
			Since $\varphi$ is zero on $\partial \Omega$, we may rewrite last sum as
			\begin{equation}
				\begin{split}
					\int_\Omega f \varphi\, dx
					+\sum_{i=1}^N
						\int_{\Omega_i} \nabla \cdot \Big( \varepsilon \nabla u \Big) \varphi\, dx
					& =
						\sum_{e \in \Gamma_I}
						\int_e [\varepsilon \nabla u \cdot \nu] \varphi\, ds
					.
				\end{split}
			\end{equation}
			Note that we may threat this relationship as an equality of distributions. Since $f, \nabla \cdot \big( \varepsilon \nabla u \big)\in L_2(\Omega) = \Big( L_2(\Omega) \Big)^*$, left-hand side clearly defines a linear continuous functional over $L_2(\Omega)$, while right-hand side does not unless it is identically zero, as for example it does not converge to zero provided that $\|\varphi\|_{L_2(\Omega)} \rightarrow 0$. Since sum of any two elements of a conjugated space cannot give an element not included in this space, both sides of the above equality must be zero. Thus in particular 
			\begin{equation}
				-\sum_{i=1}^N
					\int_{\Omega_i} \nabla \cdot \Big( \varepsilon \nabla u \Big) \varphi\, dx
				=
				\int_\Omega f \varphi
				.
			\end{equation}
			This statement is true for $\varphi \in C_0^\infty(\Omega)$. It is also true for any $\varphi \in L_2(\Omega)$ as $C_0^\infty(\Omega)$ is dense in $L_2(\Omega)$, and first statement of \eqref{eq:elipt:rownowazn:gen:2} is shown.
		\end{proof}

		\begin{proof}
			\emph{(Theorem \ref{thm:u:consistency:gen})}

			First we prove \eqref{eq:con:probl1:gen} $\Rightarrow$ \eqref{eq:con:probl2:gen}.
			Assume that $u^*$ is a solution of \eqref{eq:con:probl1:gen} and that it belongs to $H^1(\Omega) \cap H^2(\mathcal{E})$. We have by definition
			\begin{equation}
				\begin{split}
					\int_\Omega \varepsilon \nabla u^* \cdot \nabla \phi\, dx & = \int_\Omega f \phi\, dx
					\quad
					\forall \phi \in  H^1_{0,\partial \Omega_D}(\Omega).
				\end{split}
			\end{equation}
			We use lemma \ref{lem:elipt:rownowazn:gen} and we obtain that for any $\phi \in L_2(\Omega)$
			\begin{equation}
				- \int_\Omega \nabla \cdot \Big( \varepsilon \nabla u^*  \Big) \phi \, dx
				=
				\int_\Omega f \phi \, dx.
			\end{equation}

			Let us take any $\varphi \in H^1(\mathcal{E}) \cap H^2(\mathcal{T}_h)$ and substitute $\phi := \varphi$. We may split integrals to
			\begin{equation}
				- \sum_{i=1}^N \int_{\Omega_i} \nabla \cdot \Big( \varepsilon \nabla u^* \Big) \varphi\, dx
				=
				\sum_{i=1}^N \int_{\Omega_i} f \varphi\, dx
				.
			\end{equation}
			By the Green theorem, we have
			\begin{equation}
				- \int_{\Omega_i} \nabla \cdot \Big( \varepsilon \nabla u^* \Big) \varphi\, dx
				=
					\int_{\Omega_i} \varepsilon \nabla u^* \cdot \nabla \varphi\, dx
					-
					\int_{\partial \Omega_i} \varepsilon \nabla u^* \cdot \nu \varphi\, dx
				.
			\end{equation}
			Summing up these results in $\Omega_i$, we get
			\begin{equation}
				\sum_{i=1}^N 
					\int_{\Omega_i} \varepsilon \nabla u^* \cdot \nabla \varphi\, dx
				-
				\sum_{i=1}^N 
					\int_{\partial \Omega_i} \varepsilon \nabla u^* \cdot \nu \varphi\, dx
				=
					\sum_{i=1}^N \int_{\Omega_i} f \varphi\, dx
					.
			\end{equation}
			By lemma \ref{lem:elipt:rownowazn:gen}, we have that $[\varepsilon \nabla u^*] = 0$ on every $e \in \Gamma_I$, thus $\{\varepsilon \nabla u^* \cdot \nu\} = \varepsilon \nabla u^* \cdot \nu$ on any $\partial \Omega_i$ and we have
			\begin{equation}
				\sum_{i=1}^N 
					\int_{\Omega_i} \varepsilon \nabla u^* \cdot \nabla \varphi\, dx
				-
				\sum_{e \in \Gamma} 
					\int_{e} \Big\{ \varepsilon \nabla u^* \cdot \nu \Big\} [\varphi]\, dx
				=
					\sum_{i=1}^N \int_{\Omega_i} f \varphi\, dx
					.
			\end{equation}
			By the homogeneous Neumann boundary condition (lemma \ref{lem:elipt:rownowazn:gen}) on $e \in \Gamma_N$ we have $\{\varepsilon \nabla u^* \cdot \nu\} = 0$ and
			\begin{equation}
				\label{eq:kogoojiepheiyaighoophie:gen}
				\sum_{i=1}^N 
					\int_{\Omega_i} \varepsilon \nabla u^* \cdot \nabla \varphi\, dx
				-
				\sum_{e \in \Gamma_{DI}} 
					\int_{e} \Big\{ \varepsilon \nabla u^* \cdot \nu \Big\} [\varphi]\, dx
				=
					\sum_{i=1}^N \int_{\Omega_i} f \varphi\, dx
					.
			\end{equation}
			Since $u^* \in H^1(\Omega)$, then $[u^*]=0$ for any $e \in \Gamma_I$ and by assumption on $e \in \Gamma_D$ we have $u^* = \hat{u}$ so we have for any $\varphi \in H^1(\mathcal{E})$
			\begin{equation}
				\begin{split}
						\sum_{e \in \Gamma_{DI}}
							\eta_{e} \int_e [u^*] [\varphi]\, ds
						&
						-
						\sum_{e \in \Gamma_{DI}}
							\int_e \Big\{ \varepsilon \nabla \varphi \cdot \nu \Big\} [u^*]\, ds
					\\ &
					=
						\sum_{e \in \Gamma_{D}}
							\eta_{e} \int_e [\hat{u}] [\varphi]\, ds
						-
						\sum_{e \in \Gamma_{D}}
							\int_e \Big\{ \varepsilon \nabla \varphi \cdot \nu \Big\} [\hat{u}]\, ds
					.
				\end{split}
			\end{equation}
			By adding this result side-by-side to \eqref{eq:kogoojiepheiyaighoophie:gen} we obtain \eqref{eq:con:probl2:gen}.

			We proceed to \eqref{eq:con:probl2:gen} $\Rightarrow$ \eqref{eq:con:probl1:gen}.
			Assume \eqref{eq:con:probl2:gen} is true.
			First, we recover the Dirichlet boundary conditions. 
			Take any $e \in \Gamma_D$, such that $e \in \partial \Omega_i$, and $\bar{\varphi} \in C^\infty_0(e)$. Then let $\{\varphi_\epsilon\}_\epsilon$ be a sequence of functions, such that
			\begin{alignat*}{3}
				\varphi_\epsilon & \in C^\infty(\Omega),
				\quad
				&
				\varphi_\epsilon|_e &= \bar{\varphi},
				\quad
				&
				\supp(\varphi_\epsilon) & \subset \Omega_i \cup e,
				\quad
				\\
				\varphi_\epsilon|_{\partial \Omega_i \backslash e} & \equiv 0,
				\quad
				&
				\nabla \varphi_\epsilon \cdot \nu \Big|_{\partial \Omega_i} & = 0,
				\quad
				&
				\|\varphi_\epsilon\|_{L_2(\Omega)} & \xrightarrow[\epsilon \rightarrow 0]{} 0.
			\end{alignat*}
			Then $\varphi \in H^1(\mathcal{E}) \cap H^2(\mathcal{T}_h)$ and \eqref{eq:con:probl2:gen} becomes
			\begin{equation}
					\int_{\Omega_i} \varepsilon \nabla u^* \cdot \nabla \varphi_\epsilon\, dx
					-
					\int_e \varepsilon \nabla u^* \cdot \nu \bar{\varphi}\, ds
					+
					\eta_{e} \int_e u^*   \bar{\varphi}\, ds
				 =
					\int_{\Omega_i} f \varphi_\epsilon\, dx
					+
					\eta_{e} \int_e \hat{u} \bar{\varphi}\, ds
				.
			\end{equation}
			By the Green theorem
			\begin{equation}
					\int_{\Omega_i} \nabla \cdot \Big( \varepsilon \nabla u^* \Big) \varphi_\epsilon\, dx
					+
					\eta_{e} \int_e u^*   \bar{\varphi}\, ds
				 =
					\int_{\Omega_i} f \varphi_\epsilon\, dx
					+
					\eta_{e} \int_e \hat{u} \bar{\varphi}\, ds
				.
			\end{equation}
			Passing to the limit $\epsilon \rightarrow 0$
			\begin{equation}
					\eta_{e} \int_e u^* \bar{\varphi}\, ds
				 =
					\eta_{e} \int_e \hat{u} \bar{\varphi}\, ds
				.
			\end{equation}
			Since $\bar{\varphi} \in C_0^\infty(e)$ and $e \in \Gamma_D$ are arbitrary, we get
			\begin{equation}
				u^*|_{\partial \Omega_D} = \hat{u}|_{\partial \Omega_D},
			\end{equation}
			and the Dirichlet boundary conditions are satisfied.

			Then take any $\varphi \in C^\infty_{0,\partial \Omega_D} (\overline{\Omega})$. Thus 
			\begin{equation}
				\sum_{e \in \Gamma_{DI}} \eta_{e} \int_e[u^*] [\varphi]\, ds
				=
				\sum_{e \in \Gamma_D} \eta_{e} \int_e [\hat{u}] [\varphi]\, ds
				= 0.
			\end{equation}
			as $[\varphi] = 0$ for any $e \in \Gamma_I$ since $\varphi \in C^\infty_{0,\partial \Omega_D} (\overline{\Omega})$ and on $e \in \Gamma_D$ we have $[\varphi]=\varphi \equiv 0$.
			Analogously we see that
			\begin{equation}
				- \sum_{e \in \Gamma_{DI}}
					\int_e \Big\{\varepsilon \nabla u^* \cdot \nu\Big\} [\varphi]\, ds
					=
					0
				.
			\end{equation}
			By the assumptions of the theorem $u^* \in H^1(\Omega)$, so $[u^*] = 0$ for any $e \in \Gamma_I$ while as we have already been shown $u^* = \hat{u}$ for $e \in \Gamma_D$, so
			\begin{equation}
				- \sum_{e \in \Gamma_{DI}} \int_e \Big\{ \varepsilon \nabla \varphi \cdot \nu \Big\} [u^*]\, ds
				=
				- \sum_{e \in \Gamma_{D}} \int_e \Big\{ \varepsilon \nabla \varphi \cdot \nu \Big\} [\hat{u}]\, ds.
			\end{equation}
			Thus we obtain
			\begin{equation}
					\sum_{i=1}^N \int_{\Omega_i} \varepsilon \nabla u^* \cdot \nabla \varphi\, dx
				 =
					\int_\Omega f \varphi\, dx
				.
			\end{equation}
			Since this statement is true for any $\varphi \in C^\infty_{0,\partial \Omega_D} (\overline{\Omega})$, then it is valid also for any $\varphi \in H^1_{0,\partial \Omega_D}(\Omega)$, so we regain the first statement of \eqref{eq:con:probl1:gen}.
		\end{proof}

	\subsection{Auxiliary estimates}

		For better readability, we will divide the differential operators into few components. We define the following operators
		\begin{equation}
			\begin{split}
				A(u, \varphi)
				& :=
					\sum_{i=1}^N
						\int_{\Omega_i} \varepsilon \nabla u \cdot \nabla \varphi\, dx,
				\\
				B(u,\varphi)
				& :=
					\sum_{i=1}^N
						\int_{\Omega_i} \big( e^{u - \hat{v}} - e^{\hat{w} - u} \big)  \varphi\, dx,
				\\
				C(\varphi)
				& :=
					\sum_{i=1}^N
						\int_{\Omega_i} k_1 \varphi\, dx,
				\\
				D(u,\varphi)
				& :=
					- \sum_{e \in \Gamma_{DI}}
						\int_e \Big\{\varepsilon \frac{\partial u}{\partial \nu}\Big\} [\varphi]\, ds,
				\\
				E(u,\varphi)
				& :=
					- \sum_{e \in \Gamma_{DI}}
						\int_e \Big\{\varepsilon \frac{\partial \varphi}{\partial \nu}\Big\} [u]\, ds,
				\\
				F(\varphi)
				& :=
					- \sum_{e \in \Gamma_{D}}
						\int_e \{\varepsilon \nabla \varphi \cdot n \} [\hat{u}]\, ds,
				\\
				I(\varphi)
				& :=
					\sum_{e \in \Gamma_{D}}
						\eta_{e} \int_e [\hat{u}] \cdot [\varphi]\, ds,
				\\
				J(u,\varphi)
				& :=
					\sum_{e \in \Gamma_{DI}}
						\eta_{e} \int_e [u] \cdot [\varphi]\, ds.
			\end{split}
		\end{equation}
		In this section, we will prove several estimates for these operators. These estimates will be used in derivation of main result in Section \ref{sec:main:estimate}.

		\begin{lemma}
			\label{lemma:ADEJ:nonnegative}
			Let $u_h \in X_h(\Omega)$. Then
			\begin{equation}
				\begin{split}
					A & (u_h, u_h)
					+ J(u_h, u_h)
					+ D(u_h, u_h)
					+ E(u_h, u_h)
					\geq
					c \|u_h\|_h^2.
				\end{split}
			\end{equation}
		\end{lemma}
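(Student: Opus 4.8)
The plan is to read the left-hand side as (essentially) the coercive form already handled in Lemma~\ref{lem:broken:koerc:symm}, splitting the argument according to the two admissible values of $\xi$. First I would unwind the definitions of $A$ and $J_r$: by the definition of the broken norm (\ref{def:broken:norm:2d}) one has, for the penalty exponent $r$ attached to the method,
\begin{equation*}
A(u_h,u_h)+J_r(u_h,u_h)=\sum_{i=1}^N\int_{\Omega_i}\varepsilon\big(\nabla u_{h,i}\big)^2\,dx+\sum_{e\in\Gamma_{DI}}\eta_{r,e}\int_e[u_h]^2\,ds=\|u_h\|_{h,\sigma_r}^2.
\end{equation*}
This already disposes of the CWOPSIP case $\xi=0$ (with $r=2$): the terms $\xi D$ and $\xi E$ drop out and the asserted inequality holds as an equality with $c=1$.

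For the CSIPG case $\xi=-1$ (with $r=1$) I would reproduce the argument of Lemma~\ref{lem:broken:koerc:symm}. The key observation is that, since both arguments coincide,
\begin{equation*}
D(u_h,u_h)=E(u_h,u_h)=-\sum_{e\in\Gamma_{DI}}\int_e\{\varepsilon\nabla u_h\cdot\nu\}[u_h]\,ds,
\end{equation*}
so that $\xi D(u_h,u_h)+\xi E(u_h,u_h)=2\sum_{e\in\Gamma_{DI}}\int_e\{\varepsilon\nabla u_h\cdot\nu\}[u_h]\,ds$. Combining this with the identity above gives
\begin{equation*}
A(u_h,u_h)+J_1(u_h,u_h)+\xi D(u_h,u_h)+\xi E(u_h,u_h)=\|u_h\|_{h,\sigma_1}^2+2\sum_{e\in\Gamma_{DI}}\int_e\{\varepsilon\nabla u_h\cdot\nu\}[u_h]\,ds.
\end{equation*}
I would then invoke Lemma~\ref{lem:diff:part:broken:norm:oszac:symm} with $\alpha=1/2$ (and the corresponding choice of $\sigma_0$), which bounds the absolute value of the interface sum by $\tfrac12\|u_h\|_{h,\sigma_1}^2$; hence the whole expression is at least $\|u_h\|_{h,\sigma_1}^2-\tfrac12\|u_h\|_{h,\sigma_1}^2=\tfrac12\|u_h\|_{h,\sigma_1}^2$, giving the claim with $c=1/2$ (so $c=1/2$ covers both cases at once).

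I expect the only genuinely delicate point to be the indefinite interface contribution $\xi(D+E)$ in the symmetric case: unlike the penalty and volume terms it carries no fixed sign, so it cannot simply be discarded. This is exactly where the over-penalization hypothesis $\sigma_e\geq\sigma_0$ is needed, entering through Lemma~\ref{lem:diff:part:broken:norm:oszac:symm}; everything else is bookkeeping with the definitions of the operators and the broken norm. I also note that, for consistency with the quadratic left-hand side, the right-hand side should read $c\|u_h\|_{h,\sigma_r}^2$ (the square), matching the coercivity estimate of Lemma~\ref{lem:broken:koerc:symm}.
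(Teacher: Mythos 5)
Your proof is correct and takes essentially the same approach as the paper: for $\xi=0$ both use the broken-norm identity $A+J_r=\|u_h\|_{h,\sigma_r}^2$, and for $\xi=-1$ the paper simply cites Lemma~\ref{lem:broken:koerc:symm}, whose proof is precisely the application of Lemma~\ref{lem:diff:part:broken:norm:oszac:symm} with $\alpha=1/2$ that you spell out. Your two side remarks are also on target: the right-hand side should indeed be the squared norm $c\|u_h\|_{h,\sigma_r}^2$, and your explicit sign computation (showing that for $\xi=-1$ the interface term enters with a $+$ sign, which is harmless since Lemma~\ref{lem:diff:part:broken:norm:oszac:symm} bounds its absolute value) is in fact more careful than the paper's bare citation of the coercivity lemma, which strictly speaking concerns the opposite sign.
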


		\begin{proof}
			It is a simple consequence of lemma \ref{lem:broken:koerc:symm}.
		\end{proof}

		\begin{lemma}
			\label{lemma:B:nonnegative}
			Let $u,v \in L_2(\Omega)$. Then $B(u,u-v) - B(v,u-v) \geq 0$.
		\end{lemma}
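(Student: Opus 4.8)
The plan is to reduce the claim to a pointwise monotonicity inequality, exactly as in the uniqueness argument of Section~\ref{sec:uniqueness}. First I would observe that since $\{\Omega_i\}_{i=1}^N$ is a partition of $\Omega$, the sum defining $B$ collapses into a single integral over $\Omega$, so that
\begin{equation}
	B(u,u-v) - B(v,u-v)
	=
	\int_\Omega
		\Big[
			\big( e^{u-\hat v} - e^{\hat w - u} \big)
			- \big( e^{v-\hat v} - e^{\hat w - v} \big)
		\Big]
		(u-v)\, dx.
\end{equation}

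Next, for a fixed $x \in \Omega$ I would introduce the scalar function $g_x(t) := e^{t-\hat v(x)} - e^{\hat w(x) - t}$, so that the integrand above equals $\big(g_x(u(x)) - g_x(v(x))\big)\,(u(x)-v(x))$. Since $g_x'(t) = e^{t-\hat v(x)} + e^{\hat w(x)-t} > 0$, the map $g_x$ is strictly increasing on $\mathbb{R}$; hence $g_x(a) - g_x(b)$ and $a-b$ always carry the same sign, which yields the pointwise bound $\big(g_x(a)-g_x(b)\big)(a-b) \ge 0$ for all $a,b \in \mathbb{R}$. Applying this with $a = u(x)$ and $b = v(x)$ shows that the integrand is nonnegative almost everywhere.

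Finally I would integrate this nonnegative integrand to conclude that the whole expression is $\ge 0$. The only subtlety is a matter of well-definedness rather than a genuine obstacle: for general $u,v \in L_2(\Omega)$ the exponentials need not be integrable, but because the integrand is nonnegative its integral is a well-defined element of $[0,+\infty]$, so the inequality $B(u,u-v)-B(v,u-v)\ge 0$ holds in the extended sense (and is finite whenever $u,v$ are bounded, which is the setting in which the lemma is applied). I expect no hard part here; the entire content of the statement is the monotonicity of $t \mapsto e^{t-\hat v}-e^{\hat w - t}$.
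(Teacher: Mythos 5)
Your proof is correct and follows essentially the same route as the paper: the paper writes $B(u,u-v)-B(v,u-v) = \int_\Omega e^{-\hat{v}}\big(e^{u}-e^{v}\big)(u-v)\,dx + \int_\Omega e^{\hat{w}}\big(e^{-v}-e^{-u}\big)(u-v)\,dx$ and invokes monotonicity of the exponential for each term, which is just the two-term version of your single monotone function $g_x$. Your closing remark on well-definedness of the integrals for general unbounded $u,v \in L_2(\Omega)$ addresses a subtlety the paper passes over silently, but it does not alter the substance of the argument.
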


		\begin{proof}
			Since the exponential function is monotone, we have
			\begin{equation}
				\begin{split}
					B(u,u-v) - B(v,u-v)
					& =
						\int_\Omega e^{-\hat{v}} \big( e^u - e^v \big)  (u-v)\, dx
					\\ & 
						+
						\int_\Omega e^{\hat{w}} \big( e^{-v} - e^{-u} \big)  (u-v)\, dx
					\geq
						0
						.
				\end{split}
			\end{equation}
		\end{proof}

		\begin{lemma}
			\label{lemma:AJ:broken:estimate}
			Let $u, \varphi \in H^1(\mathcal{E})$. Then
			\begin{equation}
				\Big|
					A(u, \varphi)
					+ J(u, \varphi)
				\Big|
				\leq
				C \|u\|_{h} \|\varphi\|_{h}.
			\end{equation}
		\end{lemma}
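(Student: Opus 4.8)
The plan is to observe that the bilinear form $(u,\varphi)\mapsto A(u,\varphi)+J_r(u,\varphi)$ is precisely the symmetric bilinear form whose associated quadratic form is the square of the broken norm. Indeed, comparing the definitions of $A$, $J_r$ with (\ref{def:broken:norm:2d}) gives at once
\[
	A(w,w) + J_r(w,w)
	=
	\sum_{i=1}^N \int_{\Omega_i} \varepsilon \big( \nabla w \big)^2 \, dx
	+
	\sum_{e \in \Gamma_{DI}} \eta_{r,e} \int_e [w]^2 \, ds
	=
	\|w\|_{h,\sigma_r}^2,
\]
so that $\langle u,\varphi\rangle := A(u,\varphi)+J_r(u,\varphi)$ is a symmetric, positive semidefinite bilinear form on $H^1(\mathcal{E})$ that induces $\|\cdot\|_{h,\sigma_r}$ (positivity follows from $\varepsilon\geq\varepsilon_m>0$ and $\eta_{r,e}>0$). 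The desired estimate is then nothing but the Cauchy--Schwarz inequality for this form, valid for any symmetric positive semidefinite bilinear form, and it yields the claim even with $C=1$.

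If one prefers an explicit term-by-term argument (which gives $C=2$), I would proceed as follows. First, for the volume part I apply the Cauchy--Schwarz inequality under the integral with respect to the weight $\varepsilon$ on each $\Omega_i$, obtaining
\[
	|A(u,\varphi)|
	\leq
	\sum_{i=1}^N
	\Big( \int_{\Omega_i} \varepsilon \big( \nabla u \big)^2 \, dx \Big)^{1/2}
	\Big( \int_{\Omega_i} \varepsilon \big( \nabla \varphi \big)^2 \, dx \Big)^{1/2},
\]
after which a discrete Cauchy--Schwarz over the index $i$ bounds the right-hand side by $\|u\|_{h,\sigma_r}\|\varphi\|_{h,\sigma_r}$. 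Second, for the jump part I factor $\eta_{r,e}=\eta_{r,e}^{1/2}\cdot\eta_{r,e}^{1/2}$, apply Cauchy--Schwarz on each edge $e$ and then a discrete Cauchy--Schwarz over $e\in\Gamma_{DI}$, obtaining
\[
	|J_r(u,\varphi)|
	\leq
	\Big( \sum_{e \in \Gamma_{DI}} \eta_{r,e} \int_e [u]^2 \, ds \Big)^{1/2}
	\Big( \sum_{e \in \Gamma_{DI}} \eta_{r,e} \int_e [\varphi]^2 \, ds \Big)^{1/2}
	\leq
	\|u\|_{h,\sigma_r}\|\varphi\|_{h,\sigma_r}.
\]
Adding the two bounds with the triangle inequality then finishes the proof.

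I do not expect any genuine obstacle here: the statement is a pure continuity (boundedness) bound for the principal part of the discrete form, and the broken norm was designed precisely so that its square coincides with $A(\cdot,\cdot)+J_r(\cdot,\cdot)$ on the diagonal. The only points to keep track of are the $\varepsilon$-weighting in the volume term — best handled by using the weighted Cauchy--Schwarz, so the bound is expressed directly in the norm rather than picking up spurious factors of $\varepsilon_m,\varepsilon_M$ — and the symmetric splitting of $\eta_{r,e}$ in the edge term. Notably, neither step invokes the trace estimates of lemma \ref{lemma:interface:trace:estimates} nor any property specific to the value of $r$, so the same argument covers both $r=1$ and $r=2$, hence both the CSIPG and CWOPSIP settings, simultaneously.
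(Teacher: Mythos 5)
Your proof is correct and takes the same route as the paper, whose entire argument is that the bound is ``a simple consequence of the Schwarz inequality'': your term-by-term version is exactly that argument spelled out, and your abstract version (Cauchy--Schwarz for the symmetric positive semidefinite form $A+J_r$, whose diagonal is $\|\cdot\|_{h,\sigma_r}^2$) is a slightly slicker packaging of it that even yields $C=1$.
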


		\begin{proof}
			It is a simple consequence of the Schwarz inequality.
		\end{proof}

		\begin{lemma}
			\label{lemma:B:broken:estimate}
			Let $u, v, \varphi \in H^1(\mathcal{E})$ and $\alpha \leq u, v \leq \beta$ for some $\alpha, \beta \in \mathbb{R}$. Then
			\begin{equation}
				\begin{split}
					\Big|
						B(u, \varphi)
						- B(v, \varphi)
					\Big|
					\leq
						C \|u - v\|_{h}
							\|\varphi\|_{h}
						,
				\end{split}
			\end{equation}
			where $C$ is a constant dependent on
			$\alpha$, $\beta$,
			$\|\hat{v}\|_{L_\infty(\Omega)}$ and
			$\|\hat{w}\|_{L_\infty(\Omega)}$.
		\end{lemma}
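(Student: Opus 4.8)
The plan is to exploit the mean value theorem to linearize the difference of exponentials and then reduce the whole estimate to the Poincaré-type bound of lemma \ref{lemma:poincare:broken:norm:2d}. First I would write out the difference explicitly:
\begin{equation}
	B(u,\varphi) - B(v,\varphi)
	=
		\sum_{i=1}^N \int_{\Omega_i}
			\Big[
				e^{-\hat{v}}\big(e^{u} - e^{v}\big)
				+
				e^{\hat{w}}\big(e^{-v} - e^{-u}\big)
			\Big]
			\varphi\, dx
	.
\end{equation}
Since $\alpha \leq u, v \leq \beta$ pointwise, the mean value theorem applied to $t \mapsto e^{t}$ and $t \mapsto e^{-t}$ gives $|e^{u}-e^{v}| \leq e^{\beta}|u-v|$ and $|e^{-v}-e^{-u}| \leq e^{-\alpha}|u-v|$ on each $\Omega_i$. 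Combining this with the uniform bounds $\|e^{-\hat{v}}\|_{L_\infty(\Omega)} \leq e^{\|\hat{v}\|_{L_\infty(\Omega)}}$ and $\|e^{\hat{w}}\|_{L_\infty(\Omega)} \leq e^{\|\hat{w}\|_{L_\infty(\Omega)}}$ yields a pointwise Lipschitz-type estimate of the integrand by $K\,|u-v|\,|\varphi|$, where the constant $K$ depends only on $\alpha$, $\beta$, $\|\hat{v}\|_{L_\infty(\Omega)}$ and $\|\hat{w}\|_{L_\infty(\Omega)}$, exactly as claimed.

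Next I would apply the Cauchy--Schwarz inequality on $\Omega$ to the resulting bound:
\begin{equation}
	\Big| B(u,\varphi) - B(v,\varphi) \Big|
	\leq
		K \int_\Omega |u-v|\,|\varphi|\, dx
	\leq
		K \|u-v\|_{L_2(\Omega)} \|\varphi\|_{L_2(\Omega)}
	.
\end{equation}
Finally I would invoke lemma \ref{lemma:poincare:broken:norm:2d}, which provides $\|w\|_{L_2(\Omega)} \leq c\,\|w\|_{h,\sigma_r}$ for any $w \in H^1(\mathcal{E})$ and $r \in \{1,2\}$, applied once to $w = u-v$ and once to $w = \varphi$. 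This converts both $L_2(\Omega)$ norms into broken norms and produces the desired estimate with $C := K c^2$.

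The main subtlety, rather than a genuine obstacle, is ensuring the mean value bound is uniform: the exponential is only Lipschitz on bounded intervals, so the hypothesis $\alpha \leq u, v \leq \beta$ is essential and must be used to control the derivative $e^{\xi}$ at the intermediate point $\xi \in [\alpha,\beta]$. I would also need the implicit assumption $h \leq h_0$ so that lemma \ref{lemma:poincare:broken:norm:2d} applies; this is already standing in the paper. Everything else is routine, and the constant $C$ inherits its dependence precisely from $K$ and the Poincaré constant $c$, which is independent of $h$.
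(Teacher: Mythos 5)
Your proposal is correct and takes essentially the same route as the paper's own proof: a local Lipschitz bound on the exponentials (which the paper invokes directly, where you derive it via the mean value theorem), followed by the Cauchy--Schwarz inequality and the broken Poincar\'e inequality of lemma \ref{lemma:poincare:broken:norm:2d} applied to both $u-v$ and $\varphi$. The only difference is that you make the constants and the role of the bounds $\alpha,\beta$ explicit, which the paper leaves implicit.
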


		\begin{proof}
			Note that the exponential function is locally Lipschitz-continuous, so since $u, v$ are bounded
			\begin{equation}
				\|e^{u} - e^{v}\|_{L_2(\Omega)}
				\leq
					C \|u - v\|_{L_2(\Omega)}
				.
			\end{equation}
			The same is true for
			$e^{-v} - e^{-u}$.
			Thus using the Schwarz inequality and Poincare inequality for the broken norm (lemma \ref{lemma:poincare:broken:norm:2d})
			\begin{equation}
				\begin{split}
					\Big|
						B(u, \varphi)
						- B(v, \varphi)
					\Big|
					& =
						\Big|
							\int_\Omega e^{-\hat{v}} \big( e^{u} - e^{v} \big) \varphi \, dx
							+
							\int_\Omega e^{\hat{w}} \big( e^{-v} - e^{-u} \big) \varphi \, dx
						\Big|
					\\ &
					\leq
						C \|u - v\|_{L_2(\Omega)}
						\|\varphi\|_{L_2(\Omega)}
					\leq
						C \|u - v\|_{h}
						\|\varphi\|_{h}
						.
				\end{split}
			\end{equation}
		\end{proof}

		\begin{lemma}
			\label{lemma:D:interpolation:estimate}
			Let $u \in H^{k+1}(\mathcal{E})$, $u_I := \Pi_h u$ (see Section \ref{sec:interpolation:convergence:2d}) and $\varphi_h \in X_h(\Omega)$. Then
			\begin{equation}
				\begin{split}
					|D(u - u_I, \varphi_h)|
					&
						\leq
							C h^k
								\Big( \sum_{i=1}^N |u_i|_{H^{k+1}(\Omega_i)}^2 \Big)^{1/2}
								\|\varphi_h\|_{h}
						.
				\end{split}
			\end{equation}
			Constant $C$ depends on $\varepsilon_M$ and $\sigma_0$.
		\end{lemma}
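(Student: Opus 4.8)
The plan is to mirror the proof of Lemma \ref{lemma:D:broken:estimate}, with the single crucial addition of a trace estimate for the \emph{normal} gradient of the interpolation error. First I would write out
\[
	D(u - u_I, \varphi_h) = - \sum_{e \in \Gamma_{DI}} \int_e \big\{ \varepsilon \nabla(u - u_I) \cdot \nu \big\} [\varphi_h]\, ds,
\]
and apply the Schwarz inequality over the edges after inserting the weight $\eta_{1,e}$, so that
\[
	|D(u - u_I, \varphi_h)| \leq \Big[ \sum_{e \in \Gamma_{DI}} \eta_{1,e}^{-1} \big\| \{\varepsilon \nabla(u - u_I) \cdot \nu\} \big\|_{L_2(e)}^2 \Big]^{1/2} \Big[ \sum_{e \in \Gamma_{DI}} \eta_{1,e} \|[\varphi_h]\|_{L_2(e)}^2 \Big]^{1/2}.
\]
The second bracket is bounded by $\|\varphi_h\|_{h,\sigma_1}$ straight from the definition (\ref{def:broken:norm:2d}), so everything reduces to controlling the first bracket.

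The heart of the argument is the estimate
\[
	\| \nabla(u_i - u_{I,i}) \cdot \nu \|_{L_2(e)} \leq C h_i^{1/2} |u_i|_{H^2(\Omega_i)}, \qquad e \in \Gamma_i.
\]
This cannot be read off directly from Lemma \ref{lemma:estimate:inter:e}, since that lemma controls the \emph{tangential} derivative along $e$ (the $H^1(e)$ seminorm), whereas here the normal component of the full gradient trace is required. I would establish it as in the proof of that lemma: on each fine triangle $\tau$ with an edge on $e$ the field $\nabla(u_i - u_{I,i})$ belongs to $(H^1(\tau))^2$ (because $u_{I,i}|_\tau \in \mathbb{P}_1(\tau)$), so the trace inequality for $H^1$ functions gives
\[
	\| \nabla(u_i - u_{I,i}) \|_{L_2(e \cap \tau)} \leq C \big( h_i^{-1/2} |u_i - u_{I,i}|_{H^1(\tau)} + h_i^{1/2} |u_i - u_{I,i}|_{H^2(\tau)} \big).
\]
Using $|u_i - u_{I,i}|_{H^2(\tau)} = |u_i|_{H^2(\tau)}$ (the second derivatives of the affine interpolant vanish) together with $|u_i - u_{I,i}|_{H^1(\tau)} \leq C h_i |u_i|_{H^2(\tau)}$ from (\ref{eq:std:inter:res}), both contributions collapse to $C h_i^{1/2} |u_i|_{H^2(\tau)}$, and summing the squares over the fine triangles meeting $e$ yields the claim.

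It remains to assemble the first bracket. On an interface edge $e = \Omega_j \cap \Omega_k$ the average is bounded by $\tfrac12 \varepsilon_M$ times the sum of the two one-sided normal gradients, and I would use $\eta_{1,e}^{-1} \leq C \{h^{-1}\}^{-1} = C \tfrac{2 h_j h_k}{h_j + h_k}$ (with $C$ depending on $\sigma_0$) together with the trace estimate above; since $\tfrac{2 h_j h_k}{h_j+h_k} \cdot h_j \leq 2 h_j^2$ (and symmetrically for $k$), each edge contributes at most $C( h_j^2 |u_j|_{H^2(\Omega_j)}^2 + h_k^2 |u_k|_{H^2(\Omega_k)}^2 )$; the Dirichlet edges are handled identically with $\{h^{-1}\} = h_i^{-1}$. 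Summing over $\Gamma_{DI}$ and using that each $\Omega_i$ has only a bounded number of edges (a constant depending on $\mathcal{E}$, exactly as in Lemma \ref{lemma:D:broken:estimate}) gives
\[
	\sum_{e \in \Gamma_{DI}} \eta_{1,e}^{-1} \big\| \{ \varepsilon \nabla(u-u_I)\cdot\nu\} \big\|_{L_2(e)}^2 \leq C \sum_{i=1}^N h_i^2 |u_i|_{H^2(\Omega_i)}^2 \leq C h^2 \sum_{i=1}^N |u_i|_{H^2(\Omega_i)}^2.
\]
Taking the square root and recombining with the $\|\varphi_h\|_{h,\sigma_1}$ factor yields the assertion, with $C$ depending on $\varepsilon_M$ and $\sigma_0$. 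The only genuinely new ingredient, and hence the main obstacle, is the normal-gradient trace estimate of the second paragraph; the rest is the bookkeeping of $h$-powers already carried out in Lemma \ref{lemma:D:broken:estimate}.
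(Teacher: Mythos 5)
Your proposal is correct and follows essentially the same route as the paper's proof: decompose $D(u-u_I,\varphi_h)$ over the edges, apply the Schwarz inequality with the weights $\eta_{1,e}^{\pm 1}$ so that the $\varphi_h$ factor is absorbed into $\|\varphi_h\|_{h,\sigma_1}$, bound the gradient trace of the interpolation error on each edge by $C h_i^{1/2}|u_i|_{H^2(\Omega_i)}$, and assemble using $\eta_{1,e}^{-1}\leq C\sigma_e^{-1}h_jh_k/(h_j+h_k)$. The one point where you go beyond the paper --- re-deriving the normal-gradient trace bound on the fine triangles instead of citing lemma \ref{lemma:estimate:inter:e} --- is a sound refinement, since that lemma's second estimate is stated for the $H^1(e)$ (tangential) seminorm while the paper applies it to $\|\{\nabla(u-u_I)\cdot\nu\}\|_{L_2(e)}$ without comment; your explicit argument closes that small imprecision.
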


		\begin{proof}
			We have
			\begin{equation}
				\begin{split}
					D(u - u_I, \varphi_h)
					& =
						- \sum_{e \in \Gamma_{DI}}
							\int_e \Big\{\varepsilon \nabla ( u - u_I ) \cdot \nu \Big\} [\varphi_h]\, ds
					.
				\end{split}
			\end{equation}
			Let us take any $e \in \Gamma_{I}$, $e \in \partial \Omega_j \cap \partial \Omega_l$. Then the Schwarz inequality yields that
			\begin{equation}
				\begin{split}
					\int_e \Big\{\varepsilon \nabla ( u - u_I ) \cdot \nu \Big\}
					&
						[\varphi_h]\, ds
					\leq
						\varepsilon_M \|\{\nabla ( u - u_I ) \cdot \nu \}\|_{L_2(e)} \| [\varphi_h] \|_{L_2(e)}
					.
				\end{split}
			\end{equation}
			Then by lemma \ref{lemma:estimate:inter:e} we get
			\begin{equation}
					\begin{split}
					\|\{\nabla ( u - u_I ) \cdot \nu \}\|_{L_2(e)}
					& \leq
						(
							h_j^{k-1/2} |u_j|_{H^{k+1}(\Omega_j)}
							+ h_l^{k-1/2} |u_l|_{H^{k+1}(\Omega_l)}
						)
					\\ & \leq
						(
							h_j
							+ h_l
						)^{k-1/2}
						(
							|u_j|_{H^{k+1}(\Omega_j)}
							+ |u_l|_{H^{k+1}(\Omega_l)}
						)
					.
				\end{split}
			\end{equation}
			Therefore
			\begin{equation}
				\begin{split}
					\eta_{e}^{-1} \|\{\nabla ( u - u_I ) \cdot \nu \}\|_{L_2(e)}^2
					& \leq
						C \sigma_e^{-1} h_j h_l (h_j + h_l)^{2k-2}
						( |u_j|_{H^{k+1}(\Omega_j)} + |u_l|_{H^{k+1}(\Omega_l)} )^2
					\\ & \leq
						C h^{2k}
						( |u_j|_{H^{k+1}(\Omega_j)} + |u_l|_{H^{k+1}(\Omega_l)} )^2
					.
				\end{split}
			\end{equation}
			If $e \in \Gamma_D$, $e \in \partial \Omega_i$, then analogously we have
			\begin{equation}
					\eta_{e}^{-1} \|\{\nabla ( u - u_I ) \cdot \nu \}\|_{L_2(e)}^2
					\leq
						C \sigma_e^{-1} h_i^{2k}
						|u_i|_{H^{k+1}(\Omega_i)}^2
					\leq
						C h^{2k} |u_i|_{H^{k+1}(\Omega_i)}^2
					.
			\end{equation}
			Therefore by Schwarz inequality and the inequalities derived above we conclude that
			\begin{equation}
				\begin{split}
					&\sum_{e \in \Gamma_{DI}} 
						 \int_e \Big\{\varepsilon \nabla ( u - u_I )\cdot \nu  \Big\}
							[\varphi_h]\, ds
					\\ & \leq
						C 
							\Big( \sum_{e \in \Gamma_{DI}} \eta_{e}^{-1} \|\{\nabla ( u - u_I ) \cdot \nu \}\|_{L_2(e)}^2 \Big)^{1/2}
							\Big( \sum_{e \in \Gamma_{DI}} \eta_{e} \| [\varphi_h] \|_{L_2(e)}^2 \Big)^{1/2}
					\\ & \leq
						C h^k  
							\Big( \sum_{i=1}^N |u_i|_{H^{k+1}(\Omega_i)}^2 \Big)^{1/2}
							\|\varphi_h\|_{h}
						.
				\end{split}
			\end{equation}
			Constant $C$ is independent of $h$. It depends on $\sigma_0$, $\varepsilon_M$ and on the number of elements of $\Gamma_{DI}$.
		\end{proof}

		\begin{lemma}
			\label{lemma:E:interpolation:estimate}
			Let $u \in H^2(\mathcal{E})$, $u_I := \Pi_h u$ (see Section \ref{sec:interpolation:convergence:2d}) and $\varphi_h \in X_h(\Omega)$. Then
			\begin{equation}
				\begin{split}
					|E(u - u_I, \varphi_h)|
					& \leq
						C \|\varphi_h\|_{h}
						\Bigg[ \sum_{i=1}^N
							\Big(
								h_i^{2 k} + \sum_{\Omega_l \in \nb(\Omega_i)} \frac{h_i^{2 k+1}}{h_l}
							\Big)
							|u_i|_{H^{k+1}(\Omega_i)}^2
						\Bigg]^{1/2}
					.
				\end{split}
			\end{equation}
		\end{lemma}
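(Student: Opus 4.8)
The plan is to mirror the argument of Lemma \ref{lemma:D:interpolation:estimate}, but with the roles of the normal derivative and the jump interchanged: here the derivative falls on the discrete function $\varphi_h$, so it will be controlled by the trace estimate of Lemma \ref{lemma:interface:trace:estimates}, while the jump is taken of the interpolation error $u-u_I$, so it will be controlled by Lemma \ref{lemma:estimate:inter:e}. I would start from
\begin{equation}
	E(u-u_I,\varphi_h) = -\sum_{e\in\Gamma_{DI}}\int_e\{\varepsilon\nabla\varphi_h\cdot\nu\}\,[u-u_I]\,ds,
\end{equation}
write each integrand as $\big(\eta_{1,e}^{-1/2}\{\varepsilon\nabla\varphi_h\cdot\nu\}\big)\big(\eta_{1,e}^{1/2}[u-u_I]\big)$, and apply a discrete Cauchy--Schwarz inequality over $e\in\Gamma_{DI}$ to obtain
\begin{equation}
	|E(u-u_I,\varphi_h)| \le \Big(\sum_{e\in\Gamma_{DI}}\eta_{1,e}^{-1}\|\{\varepsilon\nabla\varphi_h\cdot\nu\}\|_{L_2(e)}^2\Big)^{1/2}\Big(\sum_{e\in\Gamma_{DI}}\eta_{1,e}\|[u-u_I]\|_{L_2(e)}^2\Big)^{1/2}.
\end{equation}

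The first factor should reduce to $\|\varphi_h\|_{h,\sigma_1}$. On an interface $e=\Omega_j\cap\Omega_k$ I would bound $\|\{\varepsilon\nabla\varphi_h\cdot\nu\}\|_{L_2(e)}^2$ by $C\varepsilon_M^2\big(h_j^{-1}|\varphi_h|_{H^1(\Omega_j)}^2+h_k^{-1}|\varphi_h|_{H^1(\Omega_k)}^2\big)$ using $\varepsilon\le\varepsilon_M$ and Lemma \ref{lemma:interface:trace:estimates}, together with $\eta_{1,e}^{-1}=(\sigma_e\{h^{-1}\})^{-1}\le\sigma_0^{-1}\,2h_jh_k/(h_j+h_k)$. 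The products then collapse, since $h_k/(h_j+h_k)\le1$ and $h_j/(h_j+h_k)\le1$, so the edge contributes at most $C\sigma_0^{-1}\varepsilon_M^2\big(|\varphi_h|_{H^1(\Omega_j)}^2+|\varphi_h|_{H^1(\Omega_k)}^2\big)$; a Dirichlet edge is handled identically with $\eta_{1,e}^{-1}=h_i/(2\sigma_e)$. Summing, using that each $\Omega_i$ has a bounded number of coarse edges and that $\varepsilon_m|\varphi_h|_{H^1(\Omega_i)}^2\le\int_{\Omega_i}\varepsilon(\nabla\varphi_{h,i})^2\,dx$, yields $\sum_{e}\eta_{1,e}^{-1}\|\{\varepsilon\nabla\varphi_h\cdot\nu\}\|_{L_2(e)}^2\le C\|\varphi_h\|_{h,\sigma_1}^2$.

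For the second factor I would invoke Lemma \ref{lemma:estimate:inter:e}, giving $\|[u-u_I]\|_{L_2(e)}^2\le C\big(h_j^3|u_j|_{H^2(\Omega_j)}^2+h_k^3|u_k|_{H^2(\Omega_k)}^2\big)$ on interfaces and $\le Ch_i^3|u_i|_{H^2(\Omega_i)}^2$ on Dirichlet edges. Multiplying by $\eta_{1,e}=\sigma_e\{h^{-1}\}$ and summing is precisely the penalty part of the CSIPG interpolation computation already carried out in (\ref{eq:inter:est:2d:csipg}), hence bounded by $C\sum_{i=1}^N\big(h_i^2+\sum_{\Omega_k\in\nb(\Omega_i)}h_i^3/h_k\big)|u_i|_{H^2(\Omega_i)}^2$. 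Combining the two factors gives the claim. The only delicate point is the bookkeeping of the powers of $h$ through $\eta_{1,e}$ on interfaces, where $\{h^{-1}\}$ is a mean of $h_j^{-1}$ and $h_k^{-1}$; once the ratios $h_k/(h_j+h_k)$ are recognised as bounded by one, the first factor cleanly collapses to the $H^1$-seminorm part of the broken norm and no spurious powers of $h$ survive.
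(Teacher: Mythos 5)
Your proof is correct and essentially coincides with the paper's: both split the average $\{\varepsilon\nabla\varphi_h\cdot\nu\}$ into one-sided traces controlled by Lemma \ref{lemma:interface:trace:estimates}, bound $[u-u_I]$ edge by edge via Lemma \ref{lemma:estimate:inter:e}, and combine the two factors by Cauchy--Schwarz. The only difference is organizational: you thread the weights $\eta_{1,e}^{\pm 1/2}$ through a global Cauchy--Schwarz over $\Gamma_{DI}$ (as in the proof of Lemma \ref{lemma:D:interpolation:estimate}), so your constant additionally involves the fixed penalty parameters $\sigma_e$ and $\sigma_0^{-1}$, whereas the paper estimates each edge directly without weights; this is harmless, since the coarse grid and the $\sigma_e$ are independent of $h$.
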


		\begin{proof}
			By the Schwarz inequality
			\begin{equation}
				\begin{split}
					|E(u - u_I, \varphi_h)|
					& \leq
					\sum_{e \in \Gamma_{DI}}
						\int_e \Big| \Big\{\varepsilon \nabla \varphi_h  \cdot \nu\Big\} \Big|  \Big| [u - u_I] \Big|\, ds
					\\ & \leq
						\varepsilon_M \sum_{e \in \Gamma_{DI}}
							\|\{\nabla \varphi_h  \cdot \nu\}\|_{L_2(e)}
							\| [u - u_I]\|_{L_2(e)}
					.
				\end{split}
			\end{equation}
			Splitting this sum up we get
			\begin{equation}
				\begin{split}
					\|\{\nabla \varphi_h  \cdot \nu\}\|_{L_2(e)}
					& \leq
						\Big\|\nabla \varphi_h  \cdot \nu \Big|_{\Omega_i} \Big\|_{L_2(e)}
						+ \Big\|\nabla \varphi_h  \cdot \nu \Big|_{\Omega_l} \Big\|_{L_2(e)}
					.
				\end{split}
			\end{equation}
			Then using lemma \ref{lemma:interface:trace:estimates} we have
			\begin{equation}
				\begin{split}
					\Big\|\nabla \varphi_h \cdot \nu \Big|_{\Omega_i} \Big\|_{L_2(e)}^2
					& \leq 
						C h_i^{-1} \|\nabla \varphi_h \|_{L_2(\Omega_i)}^2
					\leq
						C h_i^{-1} \|\varphi_h \|_{h}^2
						.
				\end{split}
			\end{equation}
			On the other hand we see that
			\begin{equation}
				\begin{split}
					\|[u - u_I]\|_{L_2(e)}
					&\leq
						\Big\|u - u_I \big|_{\Omega_i}\Big\|_{L_2(e)}
						+ \Big\|u - u_I \big|_{\Omega_l}\Big\|_{L_2(e)}
					.
				\end{split}
			\end{equation}
			By lemma \ref{lemma:estimate:inter:e} we have 
			$
				\|u - u_I \big|_{\Omega_i}\|_{L_2(e)}^2
				\leq
					C h_i^{2k+1} |u |_{H^{k+1}(\Omega_i)}^2
			$,
			so
			\begin{equation}
				\begin{split}
					|E(u - u_I, \varphi_h)|
					&
						\leq
						C \|\varphi_h\|_{h}
						\Bigg[ \sum_{i=1}^N
							\Big(
								h_i^{2k} + \sum_{\Omega_l \in \nb(\Omega_i)} \frac{h_i^{2k+1}}{h_l}
							\Big)
							|u_i|_{H^{k+1}(\Omega_i)}^2
						\Bigg]^{1/2}
					.
				\end{split}
			\end{equation}
		\end{proof}

	\subsection{Main estimate}
		\label{sec:main:estimate}

		The differential problem \eqref{eq:con:probl2:gen} satisfies:
		\begin{equation}
			\label{eq:probl:diff}
			\begin{split}
				A(u^*,\varphi)
				&
				+ B(u^*,\varphi)
				+ D(u^*,\varphi)
				+ E(u^*,\varphi)
				+ J(u^*,\varphi)
				\\ & =
					C(\varphi)
					+ F(\varphi)
					+ I(\varphi)
					\qquad
					\forall
					\varphi \in H^1(\mathcal{E}) \cap H^2(\mathcal{T}_h)
				.
			\end{split}
		\end{equation}
		On the other hand, the family of discrete problems depending on parameter $h$ is defined as
		\begin{equation}
			\label{eq:probl:discr}
			\begin{split}
				A(u_h^*,\varphi_h)
				+ B(u_h^*,\varphi_h)
				&
				+ D(u_h^*,\varphi_h)
				+ E(u_h^*,\varphi_h)
				+ J(u_h^*,\varphi_h)
				\\ & =
					C(\varphi_h)
					+ F(\varphi_h)
					+ I(\varphi_h)
					\qquad
					\forall
					\varphi_h \in X_h
				.
			\end{split}
		\end{equation}
		We subtract these equations from each other with $\varphi := \varphi_h$
		and we obtain
		\begin{equation}
			\begin{split}
				A(u^*-u_h^*,\varphi_h)
				&
				+ B(u^*,\varphi_h)
				- B(u_h^*,\varphi_h)
				+ D(u^*- u_h^*,\varphi_h)
				\\ &
				+ E(u^*-u_h^*,\varphi_h)
				+ J(u^*-u_h^*,\varphi_h)
				=
					0
				.
			\end{split}
		\end{equation}
		This is equivalent to
		$
			\lhs = \rhs
			,
		$
		where
		\begin{equation}
			\begin{split}
				\lhs & :=
					A(u^*_I-u_h^*,\varphi_h)
					+ B(u^*_I,\varphi_h)
					- B(u_h^*,\varphi_h)
					+ D(u^*_I - u_h^*,\varphi_h)
					\\ &
					+ E(u^*_I-u_h^*,\varphi_h)
					+ J(u^*_I-u_h^*,\varphi_h)
					,
			\end{split}
		\end{equation}
		and
		\begin{equation}
			\begin{split}
				\rhs & :=
					A(u^*_I-u^*,\varphi_h)
					+ B(u^*_I,\varphi_h)
					- B(u^*,\varphi_h)
					+ D(u^*_I - u^*,\varphi_h)
					\\ &
					+ E(u^*_I - u^*,\varphi_h)
					+ J(u^*_I - u^*,\varphi_h)
					.
			\end{split}
		\end{equation}

		Let us take $\varphi_h := u^*_I - u_h^*$.
		Then lemma \ref{lemma:ADEJ:nonnegative} and lemma \ref{lemma:B:nonnegative} imply $\lhs \geq c \|u^*_I - u_h^*\|_{h}^2$.
		Also we may estimate $\rhs$ with lemmas \ref{lemma:AJ:broken:estimate}, \ref{lemma:B:broken:estimate}, \ref{lemma:D:interpolation:estimate} and \ref{lemma:E:interpolation:estimate} 
		\begin{equation}
			\begin{split}
				\rhs
					\leq
						C
						\|u^*_I - u_h^*\|_{h}
						\Bigg(
							\|u^*_I - u^*\|_{h}
							+ \Bigg[ \sum_{i=1}^N
								\Big(
									h_i^{2k} + \sum_{\Omega_l \in \nb(\Omega_i)} \frac{h_i^{2k+1}}{h_l}
								\Big)
								|u^*_i|_{H^{k+1}(\Omega_i)}^2
							\Bigg]^{1/2}
						\Bigg)
					.
			\end{split}
		\end{equation}
		Thus estimating  $\lhs = \rhs$ from below and above and dividing by $\|u^*_I - u^*\|_{h} > 0$ we obtain
		\begin{equation}
			\begin{split}
				\|u^*_I - u_h^*\|_{h}
				\leq
					C \Bigg(
						\|u^*_I - u^*\|_{h}
						+ \Bigg[ \sum_{i=1}^N
							\Big(
								h_i^{2k} + \sum_{\Omega_l \in \nb(\Omega_i)} \frac{h_i^{2k+1}}{h_l}
							\Big)
							|u^*_i|_{H^{k+1}(\Omega_i)}^2
						\Bigg]^{1/2}
					\Bigg)
				.
			\end{split}
		\end{equation}
		Thus by the triangle inequality
		and interpolation error estimate \eqref{eq:inter:est:2d:csipg}
		we have
		\begin{equation}
			\begin{split}
				\|u^*-u_h^*\|_{h}
				\leq &
					\|u^*-u^*_I\|_{h}+\|u^*_I-u_h^*\|_{h}
				\\
				\leq &
					C \Bigg(
						\sum_{i=1}^N \Big( h_i^{2k} + \sum_{\Omega_l \in \nb(\Omega_i)} \frac{h_i^{2k+1}}{h_l}
						\Big) |u^*_i|_{H^{k+1}(\Omega_i)}^2
					\Bigg)^{1/2}
				.
			\end{split}
		\end{equation}
		Theorem \ref{thm:main:estimate} is therefore proven.
		For remark \ref{remark:main:estimate}, we assume that $h_i := c_i h$ for every $\Omega_i \in \mathcal{E}$
		and this estimate simplifies to
		\begin{equation}
			\begin{split}
				\|u^*-u_h^*\|_{h}
				\leq &
					C h^k \Big(
						\sum_{i=1}^N |u^*_i|_{H^{k+1}(\Omega_i)}^2 \Big)^{1/2}
				.
			\end{split}
		\end{equation}

\section{Numerical experiments}
	\label{sec:numerical:experiments}

	We would like to check whether the error estimate derived in Section \ref{sec:error:estimates} can be obtained in numerical simulations. Therefore we present two examples. These examples are not directly related to any specific semiconductor material. Simulations of the realistic semiconductor devices require accounting for material parameters and physical phenomena. These modifications do not substantially change the van Roosbroeck equations, but they go beyond the simplified model considered in this study.

	\begin{figure}
		\centering
		\includegraphics[width=0.7\textwidth]{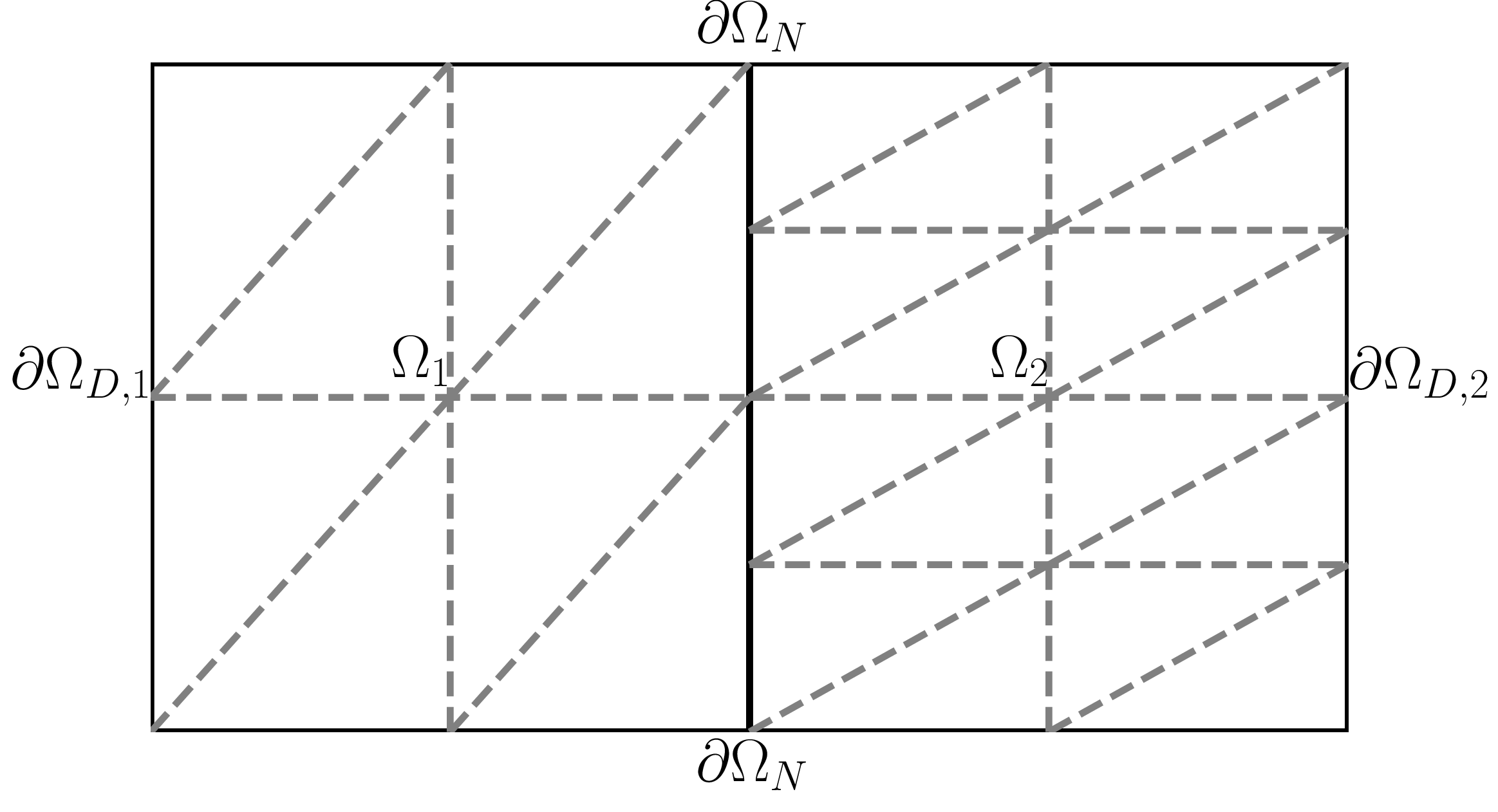}
		\caption{\label{rys:example009:K0001:grid} Schema of the first device used in simulations. It has two layers, corresponding to n-type layer $\Omega_1$ and p-type layer $\Omega_2$. Grid for $K=1$ is presented.}
	\end{figure}

	\begin{table}
		\centering
		\caption{\label{tab:example009} Parameters of the first device used in simulations. $N_x$ and $N_y$ denote number of nodes in horizontal and vertical direction, depending on parameter $K$. }
		\begin{tabular}{| l | r | r |}
			\hline
			Param.        & $\Omega_1$         & $\Omega_2$ \\
			\hline
			Length        & $\num{1e-2}$       & $\num{1e-2}$ \\ 
			Width         & $\num{1e-2}$       & $\num{1e-2}$ \\
			$N_x$         & $2 K + 1$          & $2 K + 1$  \\
			$N_y$         & $2 K + 1$          & $4 K + 1$  \\
			$\varepsilon$ & $\num{3e-3}$       & $\num{1e-3}$       \\
			$\mu_n$       & $\num{1e3}$        & $\num{3e3}$        \\
			$\mu_p$       & $\num{1e2}$        & $\num{3e2}$        \\
			$k_1$         & $\num{3e2}$        & $\num{-3e2}$       \\
			$C_{\rad}$    & $\num{1e-3}$       & $\num{2e-3}$       \\
			\hline
		\end{tabular}
	\end{table}

	Thus we will present simulations of abstract devices mimicking semiconductor p-n diodes.
	Our first example is a device, which consists of two layers $\Omega_1, \Omega_2$, corresponding to an n-type layer and a p-type layer of the p-n diode (Figure \ref{rys:example009:K0001:grid}). It has two contacts with metal electrodes, left and right, denoted by $\partial \Omega_{D,1}$ and $\partial \Omega_{D,2}$. Horizontal boundaries correspond to the contact with an insulator (e.g., air). Parameters of the device are presented in Table \ref{tab:example009}. We would thicken the grid with parameter $K$. For $K=1$, we divide both layers into two pieces in the horizontal direction, while in vertical direction $\Omega_1$ is divided into two pieces, while $\Omega_2$ is divided into four pieces (see Figure \ref{rys:example009:K0001:grid}). The grid nodes are distributed uniformly in horizontal and vertical direction within a given $\Omega_i$, and their number depends on the parameter $K$ as indicated by parameters $N_x, N_y$ in Table \ref{tab:example009}.

	In these simulations we assume that the operator $Q$ of equation \eqref{eq:dd} is some given piecewise-constant function:
	\begin{equation}
		Q(x,u,v,w) := C_{\rad}(x).
	\end{equation}
	This form corresponds to the radiative recombination \cite{Book-Selberherr-1984}. This physical process is responsible for emitting the light by a device.

	We start with the equilibrium state. Then the boundary conditions are as follows:
	$\hat{u}|_{\partial \Omega_{D,1}} = 0$
	and
	$\hat{u}|_{\partial \Omega_{D,2}} = u_{\built}$,
	where $u_{\built}$ is called a \emph{built-in potential}. It is chosen such that the charge defined as
	\begin{equation}
		\label{eq:rho}
		\rho(x) := k_1(x) - n(x) + p(x),
	\end{equation}
	is zero on $\partial \Omega_{D,2}$ if $u \equiv u_{\built}$.
	Here $n, p$ are the concentration of electrons and concentration of holes, defined as
	\begin{equation}
		\label{eq:def:n:p}
		n(x) := e^{u(x) - v(x)},
		\quad
		p(x) := e^{w(x) - u(x)}.
	\end{equation}
	This is a standard choice of the boundary conditions for the equilibrium state and it is motivated by physical arguments \cite{Book-Selberherr-1984}.
	Functions $v, w$ are constant, such that $\rho|_{\partial \Omega_{D,1}} = 0$.

	\begin{table}
		\caption{\label{tab:example009:error:step1} $L_2(\Omega)$- and $H^1(\Omega)$-error of $u$ in function of grid density parameter $K$ for the first device in equilibrium state. Numbers in brackets denote the error norm reduction factor.}
		\centering
\begin{tabular}{|r|r r|r r|}
	\hline
	K  & \multicolumn{2}{|c|}{$L_2(\Omega)$} & \multicolumn{2}{|c|}{$H^1(\Omega)$}\\
	\hline
	    1 & {      \num{4.6e-02}}  &                      & {      \num{3.0e-01}}  &              \\
	    2 & {      \num{1.1e-02}}  & \emph{          (4.0)} & {      \num{1.5e-01}}  & \emph{          (2.0)}\\
	    4 & {      \num{2.9e-03}}  & \emph{          (4.0)} & {      \num{7.6e-02}}  & \emph{          (2.0)}\\
	    8 & {      \num{6.9e-04}}  & \emph{          (4.1)} & {      \num{3.7e-02}}  & \emph{          (2.0)}\\
	   16 & {      \num{1.5e-04}}  & \emph{          (4.7)} & {      \num{1.7e-02}}  & \emph{          (2.2)}\\
	\hline
\end{tabular}

	\end{table}

	\begin{figure}
		\centering
		\includegraphics[width=0.95\textwidth]{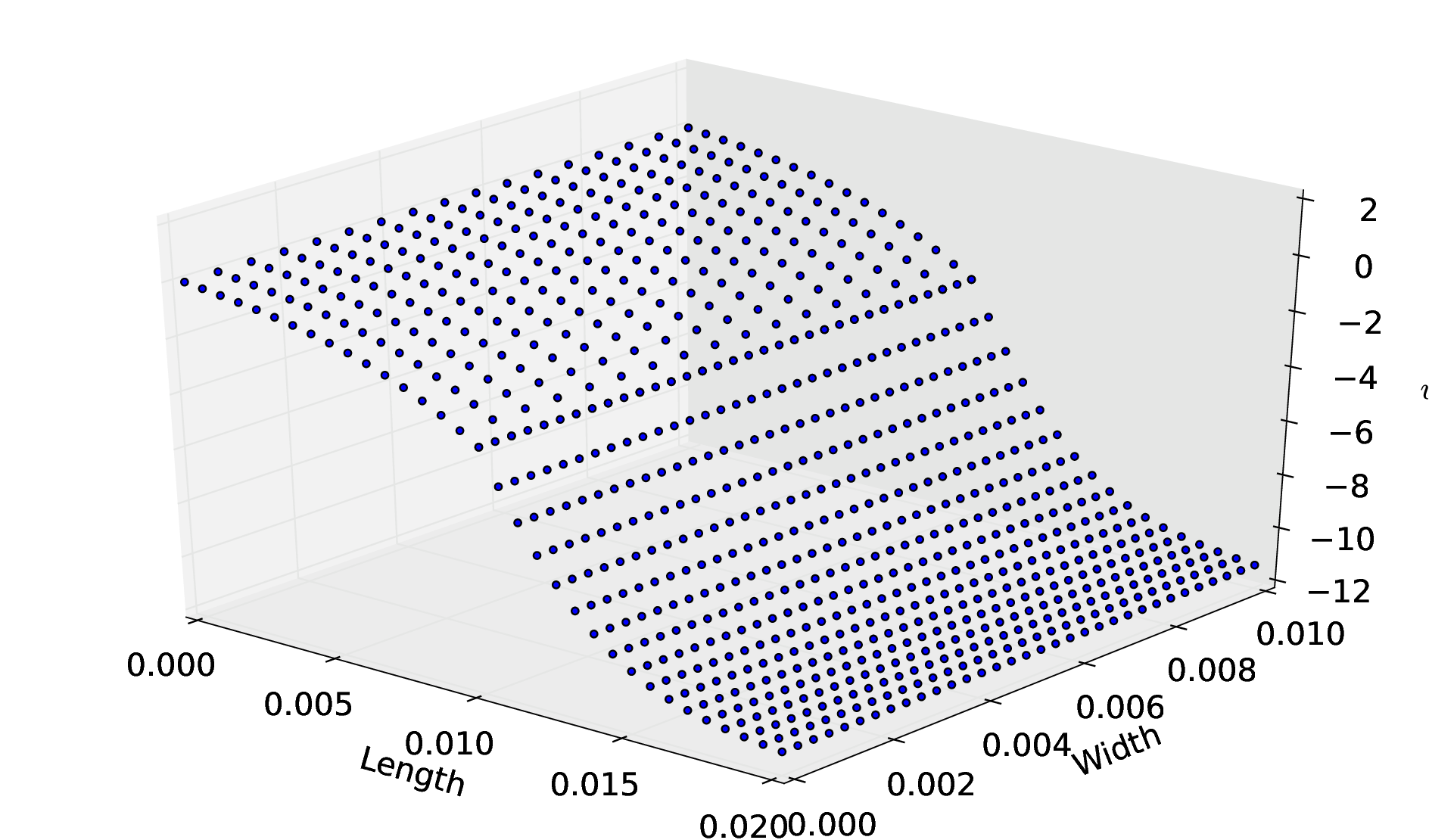}
		\caption{\label{rys:example009:K0008:step1:u} Function $u$ for the first example in the equilibrium state for $K=8$. Note one-dimensional character of the solution.}
	\end{figure}

	Simulations were performed for $K \in \{1, 2, 4, 8, 16, 32\}$, where $K=32$ is treated as a reference ``exact'' solution, i.e., 
	\begin{equation}
		\error_{K,L_2(\Omega)} := \|u_K - u_{32}\|_{L_2(\Omega)},
		\quad
		\error_{K,H^1(\Omega)} := \|u_K - u_{32}\|_{H^1(\Omega)},
	\end{equation}
	where $u_K := u_h$ for the grid parameter $K$.

	The nonlinear discrete problem was solved with the Newton method with step scaling relying on the Picard method. More details on nonlinear solver used in our simulations may be found in \cite{LectNotesComputSci-2014-8385-551}. 

	Results of these simulations are presented in Table \ref{tab:example009:error:step1}. We observe a linear reduction of the $H^1$-error, which is consistent with our theoretical result, as the $H^1$-norm is bounded by the broken norm up to a constant factor. We also note the quadratic $L_2$-norm convergence rate.
	These results were obtained for penalty parameter $\sigma_e = \num{3e6}$.

	\begin{table}
		\centering
		\caption{\label{tab:example012} Parameters of second device used in simulations. }
		\begin{tabular}{| l | r | r ||r | c | c|}
			\hline
			Param.         & $\Omega_1, \Omega_2, \Omega_3, \Omega_4, \Omega_7$       & $\Omega_5, \Omega_6, \Omega_8, \Omega_9$ & Grid & $N_x$ & $N_y$ \\  
			\hline                                                   
			Length         & $\num{1e-2}$       & $\num{1e-2}$  & $\Omega_1$ & $2K+1$ & $2K+1$ \\ 
			Width          & $\num{1e-2}$       & $\num{1e-2}$  & $\Omega_2$ & $2K+1$ & $2K+1$ \\
			$\varepsilon$  & $\num{3e-3}$       & $\num{1e-3}$  & $\Omega_3$ & $2K+1$ & $2K+1$ \\
			$\mu_n$        & $\num{1e3}$        & $\num{3e3}$   & $\Omega_4$ & $2K+1$ & $2K+1$ \\
			$\mu_p$        & $\num{1e2}$        & $\num{3e2}$   & $\Omega_5$ & $4K+1$ & $4K+1$ \\
			$k_1$          & $\num{3e2}$        & $\num{-3e2}$  & $\Omega_6$ & $2K+1$ & $4K+1$ \\
			$C_{\rad}$     & $\num{1e-3}$       & $\num{2e-3}$  & $\Omega_7$ & $2K+1$ & $2K+1$ \\
			               &                    &               & $\Omega_8$ & $4K+1$ & $2K+1$ \\
			               &                    &               & $\Omega_9$ & $2K+1$ & $2K+1$ \\
			\hline
		\end{tabular}
	\end{table}

	\begin{figure}
		\centering
		\includegraphics[width=0.9\textwidth]{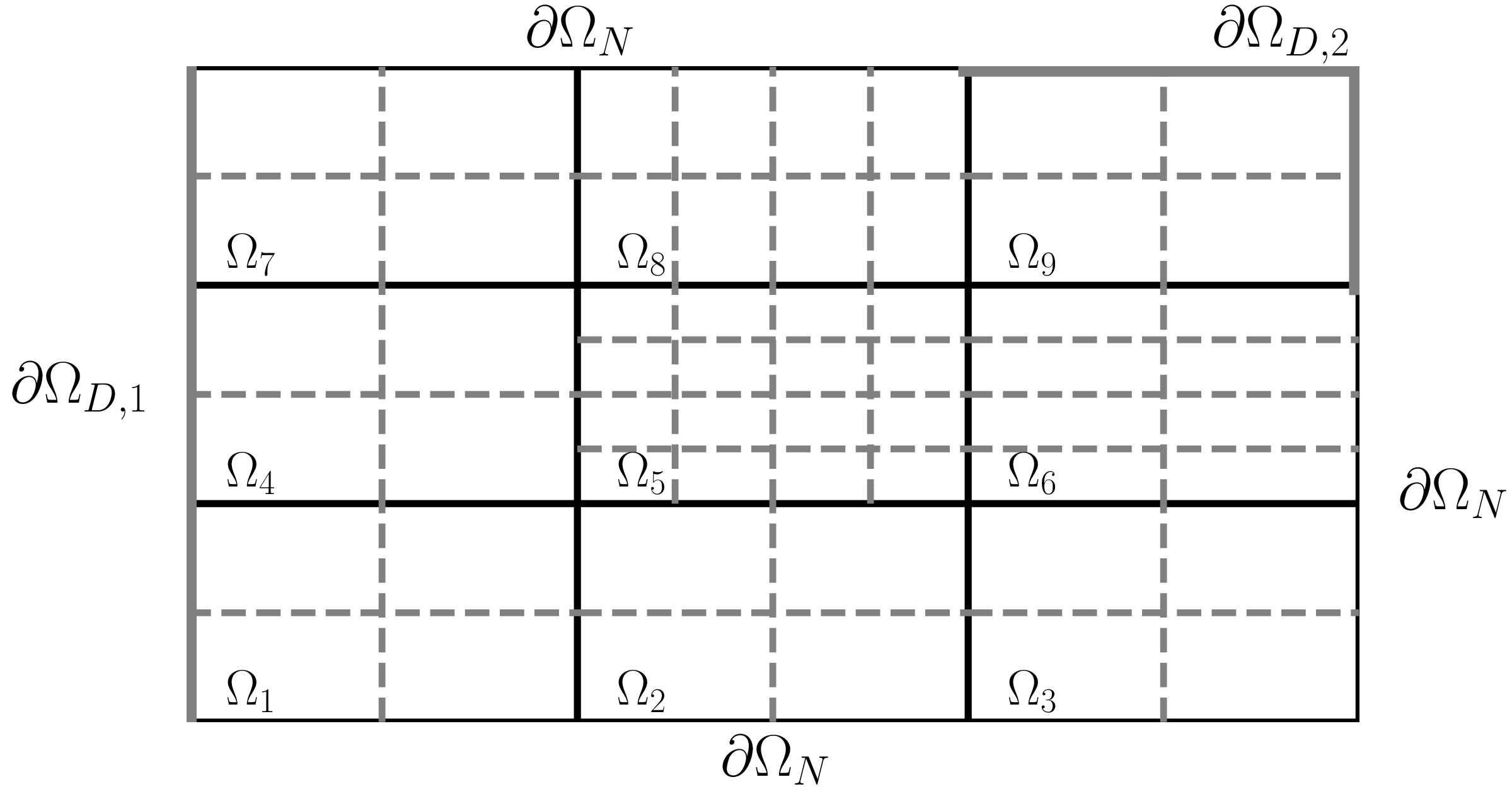}
		\caption{\label{rys:example012:K0001:grid} Schema of the second device used in simulations. Layers $\Omega_1, \Omega_2, \Omega_3, \Omega_4, \Omega_7$ correspond to the n-type region, while the remainder correspond to the p-type region. Left contact is attached to whole left edge, while right contact is attached to the boundary of $\Omega_9$. Grid for $K=1$ is presented with diagonal lines removed to improve readability.}
	\end{figure}

	\begin{table}
		\caption{\label{tab:example012:error:step1} $L_2(\Omega)$- and $H^1(\Omega)$-error of $u$ in function of grid density parameter $K$ for the second device in equilibrium state. Numbers in brackets denote the error norm reduction factor. }
		\centering
\begin{tabular}{|r|r r|r r|}
	\hline
 & \multicolumn{4}{|c|}{CSIPG}\\
	\hline
	K  & \multicolumn{2}{|c|}{$L_2(\Omega)$} & \multicolumn{2}{|c|}{$H^1(\Omega)$}\\
	\hline
	    1 & {      \num{2.0e-02}}  &                      & {      \num{1.9e-01}}  &              \\
	    2 & {      \num{5.2e-03}}  & \emph{          (3.8)} & {      \num{9.5e-02}}  & \emph{          (2.0)}\\
	    4 & {      \num{1.3e-03}}  & \emph{          (3.9)} & {      \num{4.8e-02}}  & \emph{          (2.0)}\\
	    8 & {      \num{3.5e-04}}  & \emph{          (3.8)} & {      \num{2.4e-02}}  & \emph{          (2.0)}\\
	   16 & {      \num{9.4e-05}}  & \emph{          (3.7)} & {      \num{1.2e-02}}  & \emph{          (2.0)}\\
	   32 & {      \num{2.3e-05}}  & \emph{          (4.0)} & {      \num{5.4e-03}}  & \emph{          (2.2)}\\
	\hline
\end{tabular}

	\end{table}

	As can be observed in Figure \ref{rys:example009:K0008:step1:u}, in this case, the solution has a one-dimensional nature.
	To study more sophisticated behavior, we introduce a second device with a more complex structure (Figure \ref{rys:example012:K0001:grid}, see Table \ref{tab:example012} for ``material'' parameters and grid description). 
	As we see in Table \ref{tab:example012:error:step1}, the convergence rate is similar as in the previous example.
	
	\begin{figure}
		\centering
		\includegraphics[width=0.95\textwidth]{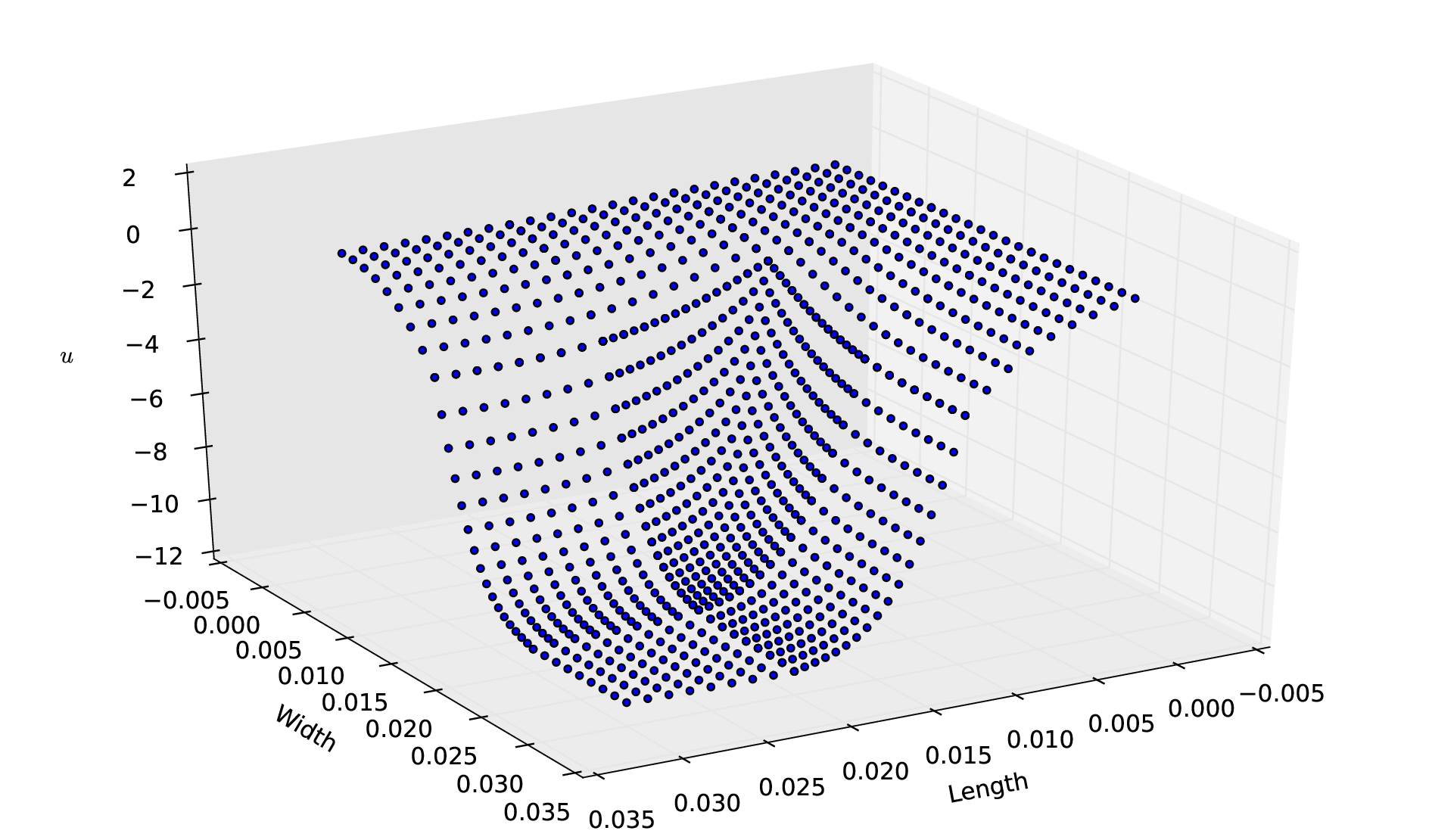}
		\caption{\label{rys:example012:K0004:step1:u} Function $u$ for the second example in the equilibrium state for $K=4$.}
	\end{figure}

	The theory presented in this paper covers only the equilibrium state, described in Section \ref{sec:differential:problem}. We also performed the simulations for the non-equilibrium state.
	Thus we use the presented discretization for every equation of system \eqref{eq:dd}.
	Boundary conditions on the function $u$ are similar as before, i.e.,
	$\hat{u}|_{\partial \Omega_{D,1}} = 0$
	and
	$\hat{u}|_{\partial \Omega_{D,2}} = u_{\built} + u_{\bias}$,
	where $u_{\bias}$ is a nonzero difference potential between the electrodes, called the bias.
	On functions $v, w$ we impose two implicit conditions on $\partial \Omega_D$:
	$v|_{\partial \Omega_D} = w|_{\partial \Omega_D}$
	and
	$\rho|_{\partial \Omega_D} = 0$, cf. \eqref{eq:rho}.
	On $\Omega_N$ we impose homogeneous Neumann boundary condition.

	\begin{table}
		\caption{\label{tab:example012:error:step4} $L_2(\Omega)$- and $H^1(\Omega)$-error of $u, v, w, n, p$ in function of grid density parameter $K$ for the second device for $u_{\bias} = 8$. Numbers in brackets denote the error norm reduction factor. Functions $n,p$ are defined in \eqref{eq:def:n:p}.}
		\centering
\begin{tabular}{|r|r r|r r|}
	\hline
 & \multicolumn{4}{|c|}{CSIPG}\\
	\hline
	K  & \multicolumn{2}{|c|}{$L_2(\Omega)$} & \multicolumn{2}{|c|}{$H^1(\Omega)$}\\
	\hline
	\multicolumn{5}{|c|}{Function: $u$} \\
	\hline
	    1 & {      \num{3.1e-02}}  &                      & {      \num{3.1e-01}}  &              \\
	    2 & {      \num{8.6e-03}}  & \emph{          (3.6)} & {      \num{1.6e-01}}  & \emph{          (1.9)}\\
	    4 & {      \num{2.6e-03}}  & \emph{          (3.3)} & {      \num{8.0e-02}}  & \emph{          (2.0)}\\
	    8 & {      \num{9.8e-04}}  & \emph{          (2.7)} & {      \num{4.0e-02}}  & \emph{          (2.0)}\\
	   16 & {      \num{4.0e-04}}  & \emph{          (2.5)} & {      \num{2.0e-02}}  & \emph{          (2.0)}\\
	   32 & {      \num{1.3e-04}}  & \emph{          (3.1)} & {      \num{8.8e-03}}  & \emph{          (2.2)}\\
	\hline
	\multicolumn{5}{|c|}{Function: $v$} \\
	\hline
	    1 & {      \num{1.7e-01}}  &                      & {      \num{9.8e-01}}  &              \\
	    2 & {      \num{9.7e-02}}  & \emph{          (1.7)} & {      \num{9.6e-01}}  & \emph{          (1.0)}\\
	    4 & {      \num{5.7e-02}}  & \emph{          (1.7)} & {      \num{9.1e-01}}  & \emph{          (1.0)}\\
	    8 & {      \num{3.2e-02}}  & \emph{          (1.8)} & {      \num{8.5e-01}}  & \emph{          (1.1)}\\
	   16 & {      \num{1.7e-02}}  & \emph{          (1.9)} & {      \num{7.5e-01}}  & \emph{          (1.1)}\\
	   32 & {      \num{6.9e-03}}  & \emph{          (2.4)} & {      \num{5.7e-01}}  & \emph{          (1.3)}\\
	\hline
	\multicolumn{5}{|c|}{Function: $w$} \\
	\hline
	    1 & {      \num{5.8e-01}}  &                      & {      \num{9.7e-01}}  &              \\
	    2 & {      \num{3.5e-01}}  & \emph{          (1.6)} & {      \num{9.5e-01}}  & \emph{          (1.0)}\\
	    4 & {      \num{2.1e-01}}  & \emph{          (1.6)} & {      \num{9.1e-01}}  & \emph{          (1.0)}\\
	    8 & {      \num{1.3e-01}}  & \emph{          (1.7)} & {      \num{8.5e-01}}  & \emph{          (1.1)}\\
	   16 & {      \num{6.8e-02}}  & \emph{          (1.9)} & {      \num{7.6e-01}}  & \emph{          (1.1)}\\
	   32 & {      \num{2.8e-02}}  & \emph{          (2.4)} & {      \num{5.9e-01}}  & \emph{          (1.3)}\\
	\hline
	\multicolumn{5}{|c|}{Function: $n$} \\
	\hline
	    1 & {      \num{4.7e-02}}  &                      & {      \num{4.5e-01}}  &              \\
	    2 & {      \num{1.5e-02}}  & \emph{          (3.1)} & {      \num{2.7e-01}}  & \emph{          (1.7)}\\
	    4 & {      \num{5.0e-03}}  & \emph{          (3.0)} & {      \num{1.5e-01}}  & \emph{          (1.8)}\\
	    8 & {      \num{1.7e-03}}  & \emph{          (2.9)} & {      \num{7.6e-02}}  & \emph{          (2.0)}\\
	   16 & {      \num{6.0e-04}}  & \emph{          (2.9)} & {      \num{3.8e-02}}  & \emph{          (2.0)}\\
	   32 & {      \num{1.7e-04}}  & \emph{          (3.5)} & {      \num{1.7e-02}}  & \emph{          (2.2)}\\
	\hline
	\multicolumn{5}{|c|}{Function: $p$} \\
	\hline
	    1 & {      \num{3.3e-02}}  &                      & {      \num{2.5e-01}}  &              \\
	    2 & {      \num{1.1e-02}}  & \emph{          (3.0)} & {      \num{1.2e-01}}  & \emph{          (2.1)}\\
	    4 & {      \num{4.6e-03}}  & \emph{          (2.4)} & {      \num{5.9e-02}}  & \emph{          (2.0)}\\
	    8 & {      \num{1.9e-03}}  & \emph{          (2.4)} & {      \num{3.0e-02}}  & \emph{          (2.0)}\\
	   16 & {      \num{7.1e-04}}  & \emph{          (2.7)} & {      \num{1.5e-02}}  & \emph{          (2.0)}\\
	   32 & {      \num{2.0e-04}}  & \emph{          (3.5)} & {      \num{6.5e-03}}  & \emph{          (2.3)}\\
	\hline
\end{tabular}

	\end{table}

	Results of this simulation are presented in Table \ref{tab:example012:error:step4}. For the function $u$, results are similar to the equilibrium state. For the functions $v, w$, the convergence is much worse. We may roughly estimate that the $L_2$-error reduces linearly, while the $H^1$-error convergence rate is sublinear, hard to estimate precisely without the exact solution.
	In the comparison, we also included the functions $n, p$. The van Roosbroeck equations may be formulated in terms of functions $u, v, w$, but from the physical point of view, there are other logical choices possible \cite{JNumerMethEng-1987-24-763}. Another choice is $u, n, p$ (see \eqref{eq:def:n:p} for the definition of $n, p$), as the charge $\rho$ and many recombination models (radiative, Shockley-Read-Hall, Auger) can be easily expressed in terms of these functions.

	We observe that the error convergence for $n,p$ is faster than for $v,w$, it is similar as for the function $u$.  Thus determination of physical parameters like the recombination rate, current or optical power, may rely on the better precision of functions $n,p$ despite the slow convergence of functions $v, w$.

\begin{tabular}{|r|r r|r r|r r|r r|}
	\hline
 & \multicolumn{4}{|c|}{P-N junction} & \multicolumn{4}{|c|}{Quantum well}\\
	\hline
	\rule{0pt}{10pt} 
	K  & \multicolumn{2}{|c|}{$L_2(\Omega)$} & \multicolumn{2}{|c|}{$H^1(\Omega)$} & \multicolumn{2}{|c|}{$L_2(\Omega)$} & \multicolumn{2}{|c|}{$H^1(\Omega)$}\\
	\hline
	    2 & {      1.8e-01}  &                        & {      6.9e-01}  &                        & {      7.5e-02}  &               & {      4.6e-01}  &              \\
	    4 & {      4.5e-02}  & \emph{          (4.0)} & {      4.2e-01}  & \emph{          (1.6)} & {      2.4e-02}  & \emph{          (3.1)} & {      2.6e-01}  & \emph{          (1.8)}\\
	    8 & {      2.3e-02}  & \emph{          (1.9)} & {      3.0e-01}  & \emph{          (1.4)} & {      7.3e-03}  & \emph{          (3.3)} & {      1.2e-01}  & \emph{          (2.2)}\\
	   16 & {      9.7e-03}  & \emph{          (2.4)} & {      2.1e-01}  & \emph{          (1.4)} & {      1.8e-03}  & \emph{          (4.0)} & {      5.9e-02}  & \emph{          (2.0)}\\
	   32 & {      2.8e-03}  & \emph{          (3.4)} & {      1.2e-01}  & \emph{          (1.8)} & {      4.5e-04}  & \emph{          (4.0)} & {      3.0e-02}  & \emph{          (2.0)}\\
	   64 & {      6.1e-04}  & \emph{          (4.6)} & {      5.7e-02}  & \emph{          (2.0)} & {      1.1e-04}  & \emph{          (4.0)} & {      1.5e-02}  & \emph{          (2.0)}\\
    128 & {      1.5e-04}  & \emph{          (4.2)} & {      2.9e-02}  & \emph{          (2.0)} & & & & \\
    256 & {      3.5e-05}  & \emph{          (4.2)} & {      1.4e-02}  & \emph{          (2.0)} & & & & \\
\hline
\end{tabular}




\section{Conclusions}
	\label{sec:conclusions}

	We have presented composite Discontinuous Galerkin discretization of the drift-diffusion equations, derived from Symmetric Interior Penalty Galerkin method \cite{Book-Riviere-2008}. The discrete problem is shown to be well-defined, and the error is estimated. In case of the uniform increase of grid density, the $H^1$-norm of error of Composite Symmetric Interior Penalty Galerkin (CSIPG) method error is estimated at $O(h)$. Results of numerical simulations presented in this paper agree with the theoretical estimates.

\section*{Acknowledgements}

	The authors acknowledge the support of the National Science Centre, Poland by Grant No. DEC-2016/21/B/ST1/00350.

\FloatBarrier


\begin{thebibliography}{10}

\bibitem{SIAMJNumerAnal-1987-24-777}
Randolph~E. Bank and Donald~J. Rose.
\newblock {Some error estimates for the box method}.
\newblock {\em {SIAM Journal of Numerical Analysis}}, 24:777--787, 1987.

\bibitem{SIAMJNumerAnal-2004-41-306}
Susanne~C. Brenner.
\newblock {Poincare-Friedrichs inequalities for piecewise H1 functions}.
\newblock {\em {SIAM Journal on Numerical Analysis}}, 41:306--324, 2004.

\bibitem{Book-Brenner-2008}
Susanne~C. Brenner and L.~Ridgway Scott.
\newblock {\em {The Mathematical Theory of Finite Element Methods}}.
\newblock {Springer}, {New York}, 2008.

\bibitem{Computing-1996-56-1}
R.~K. Coomer and I.~G. Graham.
\newblock {Massively parallel methods for semiconductor device modelling}.
\newblock {\em {Computing}}, 56:1--27, 1996.

\bibitem{ComputMethApplMath-2003-3-76}
Maksymilian Dryja.
\newblock {On discontinuous Galerkin methods for elliptic problems with
  discontinuous coefficients}.
\newblock {\em {Computational Methods in Applied Mathematics}}, 3(1):76--85,
  2003.

\bibitem{SIAMJNumerAnal-2013-51-400}
Maksymilian Dryja, Juan Galvis, and Marcus Sarkis.
\newblock {A FETI-DP preconditioner for a composite finite element and
  discontinuous Galerkin method}.
\newblock {\em {SIAM Journal on Numerical Analysis}}, 51:400, 2013.

\bibitem{ApplMathComput-2015-267-618}
Stefano Giani.
\newblock {Solving elliptic eigenvalue problems on polygonal meshes using
  discontinuous Galerkin composite finite element methods}.
\newblock {\em {Applied Mathematics and Computation}}, 267:618--631, 2015.

\bibitem{Book_Girault_FiniElem}
Vivette Girault and Pierre-Arnaud Raviart.
\newblock {\em {Finite Element Methods for Navier-Stokes Equations}}.
\newblock {Springer}, {Berlin Heidelberg}, 1986.

\bibitem{MathComput-2005-74-53}
Vivette Girault, Beatrice Riviere, and Mary~F. Wheeler.
\newblock {A Discontinuous Galerkin Method with nonoverlapping domain
  decomposition for the Stokes and Navier-Stokes Problems}.
\newblock {\em {Mathematics of Computation}}, 74(249):53--84, 2005.

\bibitem{IEEETransElectDev-1964-11-455}
H.~K. Gummel.
\newblock A self-consistent iterative scheme for one-dimensional steady state
  transistor calculations.
\newblock {\em IEEE Trans. Elect. Dev.}, 11:455--464, 1964.

\bibitem{SIAMJApplMath-1985-45-565}
Joseph~W. Jerome.
\newblock {Consistency of semiconductor modeling: an existence/stability
  analysis for the stationary van Roosbroeck system}.
\newblock {\em {SIAM Journal of Applied Mathematics}}, 45(4):565--590, 1985.

\bibitem{Book-Jerome-1996}
Joseph~W. Jerome.
\newblock {\em {Analysis of Charge Transport}}.
\newblock {Springer}, {Berlin}, 1996.

\bibitem{NumerMath-1992-60-525}
T.~Kerkhoven and Y.~Saad.
\newblock {On acceleration methods for coupled nonlinear elliptic systems}.
\newblock {\em {Numerische Mathematik}}, 60:525--548, 1992.

\bibitem{Book_Lions_QuelMeth}
J.~L. Lions.
\newblock {\em {Quelques Methodes de Resolution des Problemes aux Limites non
  Lineaires}}.
\newblock {Dunod/Gauthier-Villars}, {Paris}, 1969.

\bibitem{IntJSolidStruct-2008-45-6436}
A.F.D. Loula, M.R. Correa, J.N.C. Guerreiro, and E.M. Toledo.
\newblock {On finite element methods for heterogeneous elliptic problems}.
\newblock {\em {International Journal of Solids and Structures}},
  45(25-26):6436, 2008.

\bibitem{ComputMethApplMechEng-2001-190-3401}
L.~Machiels.
\newblock {A posteriori finite element bounds for output functionals of
  discontinuous Galerkin discretizations of parabolic problems}.
\newblock {\em {Computer Methods in Applied Mechanics and Engineering}},
  190:3401--3411, 2001.

\bibitem{Book-Markowich-1990}
P.~A. Markowich, C.~A. Ringhofer, and C.~Schmeiser.
\newblock {\em {Semiconductor Equations}}.
\newblock {Springer-Verlag}, {Wien}, 1990.

\bibitem{RepProgPhys-1999-62-277}
J.~J.~H. Miller, W.~H.~A. Schilders, and S.~Wang.
\newblock Application of finite element methods to the simulation of
  semiconductor devices.
\newblock {\em Rep. Prog. Phys.}, 62:277--353, 1999.

\bibitem{CommPureApplMath-1972-25-781}
M.~S. Mock.
\newblock On equations describing steady-state carrier distributions in a
  semiconductor device.
\newblock {\em Comm. Pure Appl. Math.}, 25:781--792, 1972.

\bibitem{Book_DiPietro_MathAspDGM}
Daniele A.~Di Pietro and Alexandre Ern.
\newblock {\em {Mathematical Aspects of Discontinuous Galerkin Methods}}.
\newblock {Springer}, {Berlin}, 2012.

\bibitem{JNumerMethEng-1987-24-763}
S.~J. Polak, C.~den Heijer, W.~H.~A. Schilders, and P.~Markowich.
\newblock {Semiconductor device modelling from the numerical point of view}.
\newblock {\em {Journal for Numerical Methods in Engineering}}, 24:763--838,
  1987.

\bibitem{Book-Riviere-2008}
Beatrice Riviere.
\newblock {\em {Discontinuous Galerkin Methods for Solving Elliptic and
  Parabolic Equations: Theory and Implementation}}.
\newblock {Society for Industrial and Applied Mathematics}, {Philadelphia},
  2008.

\bibitem{BellSystTechJ-1950-29-560}
W.~V.~Van Roosbroeck.
\newblock {Theory of flow of electrons and holes in germanium and other
  semiconductors}.
\newblock {\em {The Bell System Technical Journal}}, 29:560--607, 1950.

\bibitem{LectNotesComputSci-2014-8385-551}
Konrad Sakowski, Leszek Marcinkowski, and Stanislaw Krukowski.
\newblock {Modification of the Newton's method for the simulations of gallium
  nitride semiconductor devices}.
\newblock {\em {Lecture Notes in Computer Science}}, 8385:551--560, 2014.

\bibitem{JApplPhys-2012-111-123115}
Konrad Sakowski, Leszek Marcinkowski, Stanislaw Krukowski, Szymon Grzanka, and
  Elzbieta Litwin-Staszewska.
\newblock {Simulation of trap-assisted tunneling effect on characteristics of
  gallium nitride diodes}.
\newblock {\em {Journal of Applied Physics}}, 111(12):123115, 2012.

\bibitem{LectNotesComputSci-2016-9574-391}
Konrad Sakowski, Leszek Marcinkowski, Pawel Strak, Pawel Kempisty, and
  Stanislaw Krukowski.
\newblock {Discretization of the drift-diffusion equations with the Composite
  Discontinuous Galerkin Method}.
\newblock {\em {Lecture Notes in Computer Science}}, 9574:391--400, 2016.

\bibitem{Book-Selberherr-1984}
Siegfried Selberherr.
\newblock {\em Analysis and {S}imulation of {S}emiconductor {D}evices}.
\newblock Springer-Verlag, Wien, 1984.

\bibitem{Book-Sze-2006}
S.M. Sze and K.K. Ng.
\newblock {\em Physics of {S}emiconductor {D}evices}.
\newblock Wiley-Interscience, Berlin, 2006.

\bibitem{Book_Toselli_DomaDeco}
A.~Toselli and O.~Widlund.
\newblock {\em {Domain Decomposition Methods - Algorithms and Theory}}.
\newblock {Springer-Verlag}, {Berlin Heidelberg}, 2005.

\bibitem{Book-Wilkes-1973}
Peter Wilkes.
\newblock {\em Solid {S}tate {T}heory in {M}etallurgy}.
\newblock Cambridge University Press, Cambridge, 1973.

\end{thebibliography}
\end{document}